\documentclass[a4paper,12pt]{article}
\usepackage{mathtools}
\usepackage{amsmath}
\usepackage{amssymb}
\usepackage{amsthm}
\usepackage{wasysym}
\usepackage{tikz}
\usepackage{tikz-network}
\usepackage{graphicx}
\usepackage{eucal}
\usepackage{comment}
\usepackage{float}
\usepackage{caption}
\usepackage{mathrsfs}
\usepackage[left=2cm,right=2cm]{geometry}
\usepackage{bbm}
\usepackage{enumerate}
\usepackage{cite}
\usepackage{hyperref}

\theoremstyle{plain}%default
\newtheorem{thm}{Theorem}
\newtheorem{lem}[thm]{Lemma}
\newtheorem{cor}[thm]{Corollary}

\theoremstyle{definition}
\newtheorem{defn}[thm]{Definition}
\newtheorem{example}{Example}

\theoremstyle{definition}

\theoremstyle{definition}
\newtheorem{rem}{Remark}

\theoremstyle{definition}

\def\B{\mathscr{B}}

\def\M{\mathcal{M}}

\begin{document}
	\title{Unimodular Bicyclic Graphs}
	\author{
		Md Isheteyak Zaffer\footnote{Department of Mathematical Sciences, Tezpur University, Tezpur, Assam-784028, India; email: isheteyak.zaffer@gmail.com.}
		}
	\date{}
	
	\maketitle
	
	\begin{abstract}
		Let $G$ be a simple undirected graph with adjacency matrix $A(G)$. A graph $G$ is said to be \emph{unimodular} if $\det A(G)\in\{-1,1\}$. A connected graph with $m$ vertices and $m+k-1$ edges is called \emph{$k$-cyclic}; in particular, a bicyclic graph has $m$ vertices and $m+1$ edges. Unimodular unicyclic graphs have been completely characterized. In this paper, we investigate the corresponding problem for bicyclic graphs. We provide a complete characterization of unimodular bicyclic graphs and determine all possible values of $\det A(G)$ for a bicyclic graph $G$. Our study is motivated by the central role of unimodular graphs in the theory of graph inverses and their connections with eigenvalue reciprocity and other spectral properties of graphs.
	\end{abstract}
	
	\textit{Keywords:} Bicyclic graph, adjacency matrix, unimodular graph, perfect matching, elementary subgraph.
	
	\textit{AMS classification:} 05C05, 05C38, 05C50, 05C70, 15A09.
	
	\section{Introduction}
	All graphs in this article are undirected, simple, and connected. $E(G)$ and $V(G)$ denotes the edge set and the vertex set of a graph $G$, respectively. If a connected graph $G$ has $|V(G)|=m$ and $|E(G)|=m + 1$, then it is \textbf{bicyclic}. It follows that a bicyclic graph contains at least two cycles. If the cycles in a bicyclic graph $B$ are edge-disjoint, then $B$ is called an $\infty-$type bicyclic graph; otherwise, it is a $\theta-$type bicyclic graph. It is easy to see that the cycles in an $\infty-$type bicyclic graph share at most one vertex, while any two cycles in a $\theta-$type bicyclic graph have at least two vertices in common. The set of bicyclic graphs is denoted by $\B$. Additionally, $\B(\theta)$ and $\B(\infty)$ denotes the subset of $\theta-$type and $\infty-$type bicyclic graphs. We use $[u,v]$ or $u\sim v$ to senote an edge between vertices $u$ and $v$. The \textit{adjacency matrix} $A(G)=[a_{ij}]$ of a graph $G$ is an $n\times n$ matrix such that $$a_{ij}=\left\{\begin{array}{ll}
		1 & \text{ if } i\sim j,\\
		0 & \text{ otherwise.}
	\end{array}\right.
	$$
	
	A subgraph of $G$ whose components are exclusively $K_2$ graphs or cycles is termed as \textit{elementary subgraph}. When such a subgraph $H$ satisfies $V(H) = V(G)$, it is referred to as a \textit{spanning elementary subgraph}. If each component of a spanning subgraph is $K_2$, then the subgraph is referred as a \textit{perfect matching}. All perfect matchings is a spanning elementary subgraphs, and every graph admitting a perfect matching has an even number of vertices.
	
	The following lemma, due to Harary \cite{FH}, plays a central role in this work, as it offers a formula for determining the determinant of a graph.
	\begin{lem}(\cite{FH}, Theorem 3)\label{det}
		Suppose that $G$ is a simple graph with the adjacency matrix $A(G)$. Then $$\det(A(G))= \sum_{H}^{}2^{|C_H|}(-1)^{|C_H|+|E(H)|},$$
		where the sum is done over all spanning elementary subgraphs $H$ of $G$ and $|C_H|$ is the number of cycle components in $H$.
	\end{lem}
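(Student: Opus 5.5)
The plan is the classical Leibniz-expansion argument, as in Harary's original proof. Write $A=A(G)=[a_{ij}]$ with vertex set $\{1,\dots,n\}$ and expand
$$\det A=\sum_{\sigma\in S_n}\operatorname{sgn}(\sigma)\prod_{i=1}^n a_{i\sigma(i)}.$$
A permutation contributes a nonzero term exactly when $i\sim\sigma(i)$ for every $i$. Since $G$ is simple, $a_{ii}=0$, so such a $\sigma$ has no fixed points. Decompose $\sigma$ into disjoint cycles. A $2$-cycle $(i\,j)$ corresponds to an edge $[i,j]$. A cycle $(i_1\,i_2\,\cdots\,i_k)$ with $k\ge 3$ corresponds to a cycle $i_1\sim i_2\sim\cdots\sim i_k\sim i_1$ of $G$. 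Hence every contributing $\sigma$ determines a spanning elementary subgraph $H_\sigma$ whose $K_2$ components come from the transpositions and whose cycle components come from the longer cycles. Each nonzero term equals $\operatorname{sgn}(\sigma)$, because all the entries involved are $1$.

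Next I group the permutations by $H=H_\sigma$. Fix a spanning elementary subgraph $H$. A permutation $\sigma$ with $H_\sigma=H$ is determined by choosing, on each $K_2$ component, the unique transposition, and on each cycle component of length $k\ge 3$, one of the two cyclic orientations. So exactly $2^{|C_H|}$ permutations map to $H$. The key step is to show that all of them have the same sign, namely $(-1)^{|C_H|+|E(H)|}$.

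For the sign, a $k$-cycle in $S_n$ has sign $(-1)^{k-1}$, and this does not depend on its orientation. A $K_2$ component contributes one edge and a transposition of sign $(-1)=(-1)^{1}\cdot(-1)^{0}$. Here I count $K_2$ components as non-cycle components, so the per-component factor is $(-1)^{|E|}$, with $|E|=1$. A cycle component with $k$ edges contributes $(-1)^{k-1}=(-1)^{k+1}$, which is $(-1)^{|E|}$ times one extra factor $(-1)$ for being a cycle. Multiplying over all components gives $\operatorname{sgn}(\sigma)=(-1)^{|E(H)|+|C_H|}$.

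Summing over $H$ gives $\det A=\sum_H 2^{|C_H|}(-1)^{|C_H|+|E(H)|}$. The only delicate point is the bookkeeping: the map $\sigma\mapsto H_\sigma$ must be well defined, so no $2$-cycle may be mistaken for a cycle component (this is where simplicity matters). It must also be surjective onto spanning elementary subgraphs with fibres of size exactly $2^{|C_H|}$. I expect that counting-and-sign check to be the main obstacle.
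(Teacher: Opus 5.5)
Your argument is correct and complete. It is the classical Leibniz-expansion proof: you group the fixed-point-free permutations supported on edges of $G$ by the spanning elementary subgraph they trace out, and each such subgraph has $2^{|C_H|}$ orientations, each of sign $(-1)^{|C_H|+|E(H)|}$. The paper does not prove this lemma itself; it simply cites Harary's Theorem 3, whose proof is essentially this same argument, so your approach matches the intended one.
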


	A graph $G$ is termed \textit{unimodular} if the determinant of $A(G)$ equals either $1$ or $-1$. Unimodularity plays a significant role in understanding graph invertibility and certain spectral properties. For instance, if $G$ is \textit{sign-invertible}, it must also be unimodular, see Buckley et al. \cite{BH} for the definition of sign-invertibility. We say that $G$ possesses \textit{strong reciprocal eigenvalue property} (SR-property) when $\lambda$ and $\frac{1}{\lambda}$ occur as an eigenvalue of $A(G)$ with the same multiplicity. It is worth noting that a graph with SR-property is always unimodular. Graphs exhibiting this eigenvalue properties have been extensively investigated in the literature (see \cite{MN} and references therein).
	
	It is a well-established fact that a tree is unimodular precisely when it possesses a perfect matching. Akbari and Kirkland \cite{AK} have characterized unimodular unicyclic graphs, while Basumatary and Sarma \cite{KS} have given conditions under which a graph that has exactly one perfect matching is unimodular. Recently, Jaume et al. \cite{DDC} provided another characterization of graphs that possesses exactly one perfect matching that are unimodular, expressed in terms of the barbell-part. This leads naturally to the question: \textit{Can one characterize unimodular bicyclic graphs in general?} The present work addresses this question and provides a complete answer. In \cite{AK}, it was shown that possessing a unique perfect matching is sufficient as well as necessary for a unicyclic graph to be unimodular. However, \textit{does this equivalence extend to bicyclic graphs?} Example \ref{ex1} indicates that the answer is \textit{no}. We show that, the existence of a unique perfect matching is neither sufficient nor necessary for a bicyclic graph to be unimodular. Specifically, we show that for $B \in \B(\infty)$, possessing a unique perfect matching is necessary but not sufficient for $B$ to be unimodular, whereas for $B \in \B(\theta)$, it is sufficient but not necessary. In fact, this article provides a complete identification of bicyclic graphs that are unimodular, naturally extending the work in \cite{KS}.
	
	\begin{example}\label{ex1}
		The graph $B_1$ has exactly one perfect matching, yet $\det A(B_1) = 3$, while $B_2$ has three perfect matchings despite having $\det A(B_2) = -1$.
		\begin{figure}[H]
			\begin{center}
				\begin{tabular*}{0.6\linewidth}{@{\extracolsep{\fill}} cc}
					\begin{tikzpicture}
							\SetVertexStyle[MinSize=0.2,FillColor=black]
							\Vertex[x=0.5]{1}
							\Vertex[x=1.5,y=0.5]{2}
							\Vertex[x=1.5,y=-0.5]{3}
							\Vertex[x=-1.5,y=0.5]{2'}
							\Vertex[x=-1.5,y=-0.5]{3'}
							\Vertex[x=-0.5]{1'}
							
							\Edge(1)(1')
							\Edge(1)(2)
							\Edge(1)(3)
							\Edge(2)(3)
							\Edge(1')(2')
							\Edge(1')(3')
							\Edge(2')(3')
					\end{tikzpicture} & \begin{tikzpicture}
							\SetVertexStyle[MinSize=0.2,FillColor=black]
							\Vertex[]{1}
							\Vertex[x=1]{2}
							\Vertex[x=2]{3}
							\Vertex[x=0,y=1]{6}
							\Vertex[x=1,y=1]{5}
							\Vertex[x=2,y=1]{4}
							
							\Edge(1)(2)
							\Edge(2)(3)
							\Edge(3)(4)
							\Edge(4)(5)
							\Edge(5)(6)
							\Edge(6)(1)
							\Edge(5)(2)
				\end{tikzpicture}\\
				$B_1$ & $B_2$\\
				\end{tabular*}
			\end{center}\caption{Bicyclic graphs $B_1$ and $B_2$ in Example \ref{ex1}.}\label{fig1}
		\end{figure}
	\end{example}
	
	The structure of this article is as follows. Preliminary results required for the subsequent sections are presented in Section 2. In this section, it is also shown that a graph without a perfect matching cannot be unimodular. Section 3 focuses on investigating unimodular graphs in $\B(\infty)$ that possess a perfect matching, while Section 4 extends this analysis to graphs in $\B(\theta)$. Finally, in Section 5, we determine the set of possible values for $\det A(B)$ when $B \in \B$.
	
	\section{Some general results}
	
	In this section, we present some preliminary results on graphs in $\B$ and their spanning elementary subgraphs. Let $K$ be a subgraph of a graph $B$; then $B - K$ denotes the induced subgraph of $B$ on the set $V(B) - V(K)$. Let $\M$ represent a perfect matching in $B$. Throughout the article, we take $|V(B)| = n$, which implies that $|E(\M)| = \frac{n}{2}$. We use $|\Gamma|$ and $|P|$ to denote the length of a cycle $\Gamma$ and a path $P$, respectively. We denote a path from $i$ to $j$ by $i\leadsto j$ or $[i,\dots,j]$.
	
	\begin{rem}\label{det2}
		Let $B\in\B$ be a graph. Assume that $B$ has $m_0$ perfect matchings, $m_{1i}$ spanning elementary subgraphs containing $\Gamma_i$, and $m_2$ spanning elementary subgraphs containing cycles $\Gamma$ and $\Gamma'$. By Lemma \ref{det},
		\begin{equation*}
			\begin{split}
				\det A(B)=& m_0\times 2^0(-1)^{\frac{n}{2}}+\sum_{i}m_{1i}\times 2^1(-1)^{1+\frac{n-|\Gamma_i|}{2}+|\Gamma_i|}+m_2\times2^2(-1)^{2+\frac{n-|\Gamma|-|\Gamma'|}{2}+|\Gamma|+|\Gamma'|}\\
						=& m_0\times(-1)^\frac{n}{2}-2\sum_{i}m_{1i}\times(-1)^{\frac{n}{2}+\frac{|\Gamma_i|}{2}}+4m_2\times(-1)^{\frac{n}{2}+\frac{|\Gamma|+|\Gamma'|}{2}}\\
						=&(-1)^\frac{n}{2}\left[m_0-2\sum_{i}m_{1i}(-1)^{\frac{|\Gamma_i|}{2}}+4m_2(-1)^{\frac{|\Gamma|+|\Gamma'|}{2}}\right]\\
			\end{split}
		\end{equation*}
	\end{rem}

	We have the following lemma for a graph without perfect matching.
	
	\begin{lem}\label{nonperf}
		Let $B\in\B$ be such that $B$ does not possess any perfect matching. Then $B$ is not unimodular.
	\end{lem}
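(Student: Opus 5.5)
Without a perfect matching, the perfect-matching term $m_0$ in Remark \ref{det2} vanishes. We will show that every remaining term in Harary's formula (Lemma \ref{det}) is even, so $\det A(B)$ is even and cannot be $\pm1$.

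Let $H$ be any spanning elementary subgraph of $B$. Since $B$ has no perfect matching, $H$ contains at least one cycle component, so $|C_H|\ge 1$. Each summand $2^{|C_H|}(-1)^{|C_H|+|E(H)|}$ in Lemma \ref{det} is therefore divisible by $2$. Summing over all $H$ gives $\det A(B)\equiv 0 \pmod 2$.

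If $B$ has no spanning elementary subgraph at all, then $\det A(B)=0$. In every case $\det A(B)\notin\{-1,1\}$, so $B$ is not unimodular.

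The argument needs no structural information about bicyclic graphs beyond Lemma \ref{det}, so there is essentially no obstacle. The only point to check is the parity claim: an elementary subgraph with no cycle components consists only of $K_2$'s, and if it is spanning it is a perfect matching. So the absence of a perfect matching forces $|C_H|\ge1$ for every spanning elementary subgraph $H$.
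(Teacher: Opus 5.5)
Your proof is correct and follows essentially the same route as the paper: with no perfect matching, every spanning elementary subgraph has a cycle component, so every term of Harary's formula is even (equivalently, Remark~\ref{det2} with $m_0=0$), and the case with no spanning elementary subgraph gives $\det A(B)=0$. Working directly from Lemma~\ref{det} rather than Remark~\ref{det2} also shows that the argument holds for any graph, not just bicyclic ones.
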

	\begin{proof}
		Suppose that $B$ lacks a spanning elementary subgraph, then by Lemma \ref{det}, we have $\det A(B) = 0$. Hence, we may assume that $B$ admits a spanning elementary subgraph. Since $B$ has no perfect matching, every such subgraph must contain at least one cycle. From Remark \ref{det2}, it follows that $\det A(B) \equiv 0 \pmod 2$ (because $m_0 = 0$). Consequently, $B$ is not unimodular.
	\end{proof}

	By Lemma~\ref{nonperf}, every unimodular graph $B\in\B$ must possess a perfect matching. Before proceeding further, we present the following lemma concerning graphs with more than one perfect matching. Although the result is straightforward and appears as Proposition~6 in \cite{I}, we include a proof for the sake of completeness.

	\begin{lem}\label{mperf}
		Let $G$ be a graph. Then following are equivalent.
		\begin{enumerate}[1]
			\item $G$ contains an even cycle $\Gamma$ for which $G - \Gamma$ possesses a perfect matching.
			\item $G$ has more than one perfect matching.
		\end{enumerate}
	\end{lem}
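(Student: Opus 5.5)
We prove the two implications separately; both rest on the standard symmetric-difference argument for matchings.

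For (1)$\Rightarrow$(2), let $\Gamma$ be an even cycle in $G$ such that $G-\Gamma$ has a perfect matching $\M'$. Since $|\Gamma|$ is even, the edges of $\Gamma$ split into two disjoint perfect matchings of the subgraph $\Gamma$. Call them $\M_1$ and $\M_2$; they consist of alternate edges around the cycle. Because $\M'$ covers exactly $V(G)-V(\Gamma)$, both $\M'\cup\M_1$ and $\M'\cup\M_2$ are perfect matchings of $G$. They differ, since $\M_1\cap\M_2=\emptyset$ and $|\Gamma|\ge 4$ makes $\M_1$ nonempty. Hence $G$ has at least two perfect matchings. This direction is routine.

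For (2)$\Rightarrow$(1), take two distinct perfect matchings $\M_1\neq\M_2$ of $G$ and form the symmetric difference $D=\M_1\triangle\M_2$. Each vertex is incident to exactly one edge of $\M_1$ and exactly one edge of $\M_2$. So in $D$ every vertex has degree $0$ or $2$: it has degree $0$ when its $\M_1$-edge and $\M_2$-edge coincide, and degree $2$ otherwise. Thus $D$ is a disjoint union of cycles whose edges alternate between $\M_1$ and $\M_2$. Alternation forces each such cycle to be even, and simplicity of $G$ gives length at least $4$. Since $\M_1\neq\M_2$, $D$ is nonempty, so we may pick one of its cycles and call it $\Gamma$.

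It remains to show that $G-\Gamma$ has a perfect matching. Take $\M_1$ minus the edges of $\M_1$ lying on $\Gamma$. Every vertex of $\Gamma$ is matched by $\M_1$ along an edge of $\Gamma$, because both of its $D$-edges lie on $\Gamma$ and one of them belongs to $\M_1$. Hence no remaining edge of $\M_1$ touches $V(\Gamma)$. The remaining edges therefore lie in the induced subgraph $G-\Gamma$ and cover every vertex outside $\Gamma$ exactly once. This gives the required perfect matching of $G-\Gamma$.

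The only step needing care is this last verification: the $\M_1$-partner of each vertex of $\Gamma$ must itself lie on $\Gamma$, so that deleting $V(\Gamma)$ leaves a genuine perfect matching of the induced subgraph. The degree-$2$ structure of $D$ at vertices of $\Gamma$ settles it immediately.
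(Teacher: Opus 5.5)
Your proof is correct and follows essentially the same route as the paper. For (1)$\Rightarrow$(2) you use the two alternate-edge matchings of the even cycle, and for (2)$\Rightarrow$(1) you take an $\M_1$/$\M_2$-alternating cycle inside $\M_1\triangle\M_2$ and delete its $\M_1$-edges to obtain a perfect matching of $G-\Gamma$. The only difference is presentational: you get the alternating cycle from the degree-$0$/$2$ structure of the symmetric difference, whereas the paper grows alternating paths from both ends of an edge of $\M\setminus\M'$ until they close. Your version is tidier and avoids the paper's loose stopping rule ``until $u_j\sim v_j$''.
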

	\begin{proof}
		Suppose that $G$ contains an even cycle $\Gamma = [u_1, \dots, u_{n}]$ such that $G - \Gamma$ admits a perfect matching $\M_0$. Then the sets $\M_0 \cup \{[u_1, u_2], \dots, [u_{n-1}, u_n]\}$ and $\M_0 \cup \{[u_1, u_n], \dots, [u_3, u_2]\}$ form two distinct perfect matchings of $G$. 
		
		Conversely, assume that $G$ possesses two distinct perfect matchings $\M$ and $\M'$. We begin by showing that $G$ contains an even cycle. Observe that the symmetric difference $\M\Delta\M'$ is non-empty. If $[u,v]\in \M\Delta\M'$, then $deg(v)\geq2$ and $deg(u)\geq 2$, if not, say $deg(u)=1$, then $[u,v]\in \M\Delta\M'$ since $\M$ and $\M'$ are both perfect matchings. Without loss of generality, assume that $[u,v]\in\M\setminus\M'$. Then, there exists vertices $u_1$ and $v_1$ such that $[u,u_1],[v,v_1]\in \M'\setminus\M$. 
		
		If $u_1$ and $v_1$ are adjacent, then $\Gamma=[u,u_1,v_1,v]$ is an even cycle in $G$. Otherwise, we find vertices $u_2$ and $v_2$ such that $[u_1,u_2],[v_1,v_2]\in\M\setminus\M'$. This process continues till we find vertices $u_j$ and $v_j$ such that $u_j\sim v_j$, which necessarily happens since $G$ is a finite graph. Clearly, $\Gamma=[u,u_1\dots,u_j,v_j,\dots,v_1,v]$ is the desired even cycle in $G$. Finally, observe that $\M\setminus\{[u,v],[u_1,u_2],\dots,[v_2,v_1]\}$ is a perfect matching in $G-\Gamma$.
	\end{proof}

	\begin{defn}\label{defn1}
		Let $G$ be a graph. A cycle $\Gamma$ in $G$ is called an \textit{independent cycle} if $G-\Gamma$ has a perfect matching.
	\end{defn}

	\begin{cor}\label{cor1}
		Let $G$ be a graph. If $G$ possess a unique perfect matching, then $G$ does not contain an independent cycle.
	\end{cor}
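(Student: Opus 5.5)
The plan is to argue by contradiction and reduce the statement to Lemma~\ref{mperf}. The only gap between the two statements is that Definition~\ref{defn1} allows the independent cycle to have any length, while condition 1 of Lemma~\ref{mperf} asks for an \emph{even} cycle. So the one real step is a parity argument showing that an independent cycle is automatically even once $G$ has a perfect matching.

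Suppose $G$ has a unique perfect matching $\M$, and suppose for contradiction that $G$ has an independent cycle $\Gamma$. By Definition~\ref{defn1}, $G-\Gamma$ has a perfect matching $\M_0$. Recall that $G-\Gamma$ is the induced subgraph on $V(G)\setminus V(\Gamma)$.

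First I will fix the parity of $|\Gamma|$. Since $G$ has the perfect matching $\M$, $|V(G)|$ is even. Since $G-\Gamma$ has the perfect matching $\M_0$, $|V(G)|-|\Gamma|$ is even. Subtracting, $|\Gamma|$ is even.

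Hence $\Gamma$ is an even cycle with $G-\Gamma$ admitting a perfect matching, which is exactly condition 1 of Lemma~\ref{mperf}. That lemma then gives condition 2: $G$ has more than one perfect matching. This contradicts uniqueness, so $G$ contains no independent cycle.

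There is no real obstacle here. The only point needing care is the parity observation above: without it, Lemma~\ref{mperf} could not be applied to an odd independent cycle.
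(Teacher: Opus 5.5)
Your proof is correct and essentially the paper's: both combine the parity identity $|V(G)|=|V(G-\Gamma)|+|\Gamma|$ with Lemma~\ref{mperf}, only in the opposite order. The paper first uses Lemma~\ref{mperf} to force $\Gamma$ to be odd and then gets a parity contradiction, whereas you use parity to force $\Gamma$ to be even and then contradict uniqueness via Lemma~\ref{mperf}.
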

	\begin{proof}
		Suppose that $G$ contains an independent cycle $\Gamma$. Since $G$ has exactly one perfect matching, Lemma \ref{mperf} implies that $\Gamma$ is odd. $|V(G-\Gamma)|$ is even as $G-\Gamma$ possesses a perfect matching. Observe that $|V(G - \Gamma)| = |V(G)| - |\Gamma|$ which implies that $|V(G)|$ is odd. This contradicts the fact that there is a perfect matching in $G$. Hence, $G$ does not contain an independent cycle.
	\end{proof}

	\begin{rem}\label{rem1}
		It follows from Lemma \ref{mperf}, that an independent cycle in a graph with more than one perfect matching is even, whereas in a graph with no perfect matching it is odd.
	\end{rem}

	%Since $\infty-$type bicyclic graphs and $\theta-$type bicyclic graphs has structural differences, we separate their study into different sections.
	
	\section{Unimodularity in $\B$}
	
	We first need to determine the number of spanning elementary subgraphs of $B\in\B$ when $B$ has a perfect matching. These are already done in \cite{I}.
	
	\begin{lem}[\cite{I}, Lemma~12]\label{1ind}
		Let $B \in \B$ be a graph that possesses a perfect matching. If $B$ has a unique independent cycle $\Gamma$, then $B$ has three spanning elementary subgraphs: two perfect matchings and a spanning elementary
		subgraph containing $\Gamma$.
	\end{lem}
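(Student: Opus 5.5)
We classify every spanning elementary subgraph $H$ of $B$ by the set of cycles it contains. If $H$ contains no cycle, it is a perfect matching. If $H$ contains cycles $\Gamma_1,\dots,\Gamma_r$, then these are vertex-disjoint and $B-(\Gamma_1\cup\dots\cup\Gamma_r)$ has a perfect matching. We then show that, under the hypothesis, the only possible cycle set is $\{\Gamma\}$. For each admissible cycle set we also count the completions.

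\textbf{Step 1: perfect matchings.} Since $\Gamma$ is independent, $B-\Gamma$ has a perfect matching $\M_0$. By Remark~\ref{rem1}, $\Gamma$ is even, so Lemma~\ref{mperf} gives at least two perfect matchings. To show there are exactly two, take any two distinct perfect matchings $\M,\M'$. The proof of Lemma~\ref{mperf} shows that each component of $\M\Delta\M'$ is an alternating even cycle $C$. Moreover, $\M\setminus E(C)$ is a perfect matching of $B-C$, so $C$ is independent. By uniqueness $C=\Gamma$, hence $\M\Delta\M'=E(\Gamma)$. Thus every perfect matching of $B$ is obtained from a fixed $\M$ by possibly switching on $\Gamma$. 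This gives exactly two perfect matchings, and they induce the same matching on $B-\Gamma$.

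\textbf{Step 2: single-cycle subgraphs.} A spanning elementary subgraph with exactly one cycle $C$ makes $C$ independent, so $C=\Gamma$. Its number equals the number of perfect matchings of $B-\Gamma$. Suppose $B-\Gamma$ had two perfect matchings. Lemma~\ref{mperf} applied to $B-\Gamma$ would give an even cycle $C'$ disjoint from $\Gamma$ with $(B-\Gamma)-C'$ perfectly matchable. Then $C'$ is also an independent cycle of $B$: we match $\Gamma$ along its even length with alternate edges and use the matching of $(B-\Gamma)-C'$. Since $C'\neq\Gamma$, this contradicts uniqueness. So there is exactly one such subgraph.

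\textbf{Step 3: no subgraph with two cycles.} A spanning elementary subgraph with two cycles needs two vertex-disjoint cycles $C_1,C_2$ with $B-(C_1\cup C_2)$ perfectly matchable. In a bicyclic graph, two vertex-disjoint cycles occur only in the $\infty$-type with the cycles joined by a path, and then these are the only two cycles. Here I expect the main obstacle, because a parity check is needed. Since $n$ is even, $|C_1|+|C_2|$ is even. If both cycles are even, then $C_1$ is independent: we match $C_2$ along itself and add the matching of the rest. By symmetry $C_2$ is independent as well, which contradicts uniqueness. If both cycles are odd, neither needs to be independent, so we must rule this case out differently. In that case $\Gamma$ is even by Remark~\ref{rem1}, while the graph's only cycles are $C_1,C_2$, both odd. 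This contradicts the existence of $\Gamma$. Therefore no such subgraph exists, and $B$ has no third cycle to consider.

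Combining Steps 1--3 gives exactly three spanning elementary subgraphs: the two perfect matchings and the one containing $\Gamma$, as claimed.
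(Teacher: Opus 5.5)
Your proof is correct and complete. The paper gives no proof of this lemma; it is imported from \cite{I}. So there is no in-paper route to compare against, and your argument supplies a self-contained one. Steps~1 and~2 hold in any graph. Two distinct perfect matchings differ by vertex-disjoint alternating cycles, each of which is then independent, so uniqueness forces $\M\Delta\M'=E(\Gamma)$. A second perfect matching of $B-\Gamma$ would produce a second independent cycle disjoint from $\Gamma$. Step~3 is the only place the bicyclic hypothesis is used: vertex-disjoint cycles occur only in $\B(\infty)$, where they are the only two cycles. Your parity split there is exactly what is needed. If both cycles are even, both are independent. If both are odd, there is no room for the even cycle $\Gamma$.

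Two citations should be repaired. First, Remark~\ref{rem1} applies only once you know $B$ has more than one perfect matching, which is what you then deduce from it. Justify $|\Gamma|$ even directly by parity instead: $|V(B)|$ and $|V(B-\Gamma)|$ are both even. Alternatively, invoke Corollary~\ref{cor1} first. Second, the decomposition of $\M\Delta\M'$ into alternating even cycles is not what the proof of Lemma~\ref{mperf} establishes, since that proof builds only one cycle. Use the standard degree argument instead: every vertex meets exactly one edge of each perfect matching, so it has degree $0$ or $2$ in $\M\Delta\M'$, with its two edges coming from different matchings.

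Optionally, Step~2 can be shortened. Two perfect matchings of $B-\Gamma$, each combined with the two perfect matchings of the even cycle $\Gamma$, would give four perfect matchings of $B$, contradicting Step~1.
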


	As an immediate consequence we have the following lemma.
	\begin{lem}\label{notuni}
		Let $B\in\B$ be a graph with more than one perfect matchings. If $B$ has exactly one independent cycle, then $B$ is not unimodular.
	\end{lem}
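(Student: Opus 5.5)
The plan is to apply Lemma~\ref{1ind} directly and evaluate the determinant with Remark~\ref{det2}. Since $B$ has a perfect matching and exactly one independent cycle $\Gamma$, Lemma~\ref{1ind} says $B$ has exactly three spanning elementary subgraphs. Two of them are perfect matchings, so $m_0=2$. The third contains $\Gamma$. I first check that this third subgraph is $\Gamma$ together with a perfect matching of $B-\Gamma$. It cannot contain a second cycle $\Gamma'$: its remaining components would then have to include $\Gamma'$, so $B-\Gamma'$ would also have a perfect matching (namely $\Gamma$'s complement matching restricted suitably, or simply by definition of an elementary subgraph containing cycles $\Gamma,\Gamma'$ with $K_2$ components elsewhere). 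In that case one can split $\Gamma$ into $K_2$'s if it is even, and $\Gamma'$ would be independent, contradicting uniqueness. It is cleaner, though, to rely on the count in Lemma~\ref{1ind}, which already fixes the structure as "a spanning elementary subgraph containing $\Gamma$." So the terms of Remark~\ref{det2} are $m_0=2$, a single $m_{1}=1$ for $\Gamma$, and $m_2=0$.

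Next I determine the parity of $\Gamma$. Since $B$ has more than one perfect matching, Remark~\ref{rem1} (from Lemma~\ref{mperf}) shows the independent cycle $\Gamma$ is even, so $(-1)^{|\Gamma|/2}=\pm1$ is well defined. Remark~\ref{det2} then gives
\[
\det A(B)=(-1)^{\frac n2}\left[2-2(-1)^{\frac{|\Gamma|}{2}}\right].
\]
This equals $0$ when $|\Gamma|\equiv 0\pmod 4$ and $\pm4$ when $|\Gamma|\equiv 2\pmod 4$. In both cases $\det A(B)$ is even, hence not $\pm1$, and $B$ is not unimodular.

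A shorter route makes the parity check unnecessary: $m_0=2$ and every other term in Remark~\ref{det2} carries a factor of $2$, so $\det A(B)\equiv 0\pmod 2$ immediately. The only step needing care is justifying that the third subgraph has exactly one cycle, so that $m_2=0$. Even this is not essential for the parity argument, since every term is even whatever the value of $m_2$. I will present the parity argument as the main proof and give the explicit value $0$ or $\pm4$ as a remark.
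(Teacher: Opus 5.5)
Your proposal is correct and follows the paper's proof: Lemma~\ref{1ind} gives $m_0=2$, a single $m_1=1$ for the (even) cycle $\Gamma$, and $m_2=0$, and Remark~\ref{det2} then yields $\det A(B)=(-1)^{\frac n2}[2-2(-1)^{\frac{|\Gamma|}{2}}]\in\{0,\pm4\}$. Your parity shortcut is valid. Still, keep the explicit value in the main argument rather than in a remark, because the paper later cites this lemma for $\det A(B)\in\{0,\pm4\}$ in Lemmas~\ref{cor2} and~\ref{lem4.2}.
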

	\begin{proof}
		Let $\Gamma$ be the independent cycle in $B$. Then, by Lemma \ref{1ind}, $B$ has two perfect matchings and a spanning elementary subgraph containing $\Gamma$. It follows from Remark \ref{det2} that
		\[
		\begin{split}
			\det A(B)&=(-1)^{\frac{n}{2}}\left[2-2(-1)^{\frac{|\Gamma|}{2}}\right]\\
			&=2(-1)^{\frac{n}{2}}\left[1-(-1)^{\frac{|\Gamma|}{2}}\right]\\
			&\in\{0, \pm 4\}.
		\end{split}
		\]
	\end{proof}

	In the following sections, we examine the unimodularity of graphs in $\B$ that possess a perfect matching. Given the structural distinctions between $\infty$-type and $\theta$-type graphs, we conduct separate analyses for each type.
	
	\subsection{Unimodularity in $\B(\infty)$}

	\begin{lem}[\cite{I}, Lemma~16]\label{2ind}
		Let $B \in \B(\infty)$ be a graph that possesses a perfect matching. Let $\Gamma$ and $\Gamma'$ be cycles in $B$. If both $\Gamma$ and $\Gamma'$ are independent in $B$, then $B$ has nine spanning elementary subgraphs: four
		perfect matchings, two spanning elementary subgraphs containing only $\Gamma$ as cycle component,
		two containing only $\Gamma'$, and one containing both $\Gamma$ and $\Gamma'$.
	\end{lem}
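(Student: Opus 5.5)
The plan is to exploit the edge-disjointness of the two cycles in an $\infty$-type graph: since $\Gamma$ and $\Gamma'$ share at most one vertex, $B$ decomposes around them. First I will check that both cycles are even and vertex-disjoint. Independence of $\Gamma$ and $\Gamma'$, together with the existence of a perfect matching, means $B$ has more than one perfect matching by Lemma \ref{mperf}. Then Remark \ref{rem1} gives $|\Gamma|$ and $|\Gamma'|$ even. If the two cycles shared a vertex $v$, I would show that $B-\Gamma$ and $B-\Gamma'$ cannot both admit perfect matchings compatible with the cycle structure. The tool is a parity count on the component of $B-\Gamma$ containing $\Gamma'-v$, which is a path with an odd number of vertices together with pendant trees. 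Either this yields a contradiction, or, in the shared-vertex case, the same counting goes through with the single common vertex handled separately. I will aim to prove disjointness, since the count ``one containing both $\Gamma$ and $\Gamma'$'' requires $B-\Gamma-\Gamma'$ to have a perfect matching, which is impossible when the cycles meet.

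Next I will classify spanning elementary subgraphs by which cycles they contain. Since the only cycles of an $\infty$-type graph are $\Gamma$ and $\Gamma'$, every spanning elementary subgraph is a perfect matching, contains exactly one of the two cycles, or contains both. The central structural claim is that $B-\Gamma$, $B-\Gamma'$ and $B-\Gamma-\Gamma'$ each have a \emph{unique} perfect matching. To prove it I will apply Lemma \ref{mperf}: a second perfect matching in $B-\Gamma$ would force an even cycle in $B-\Gamma$, and that cycle must be $\Gamma'$ (the only other cycle). So uniqueness requires checking that $\Gamma'$ is not independent in $B-\Gamma$; equivalently, $B-\Gamma-\Gamma'$ has a perfect matching exactly once. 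I expect to handle this by noting that $B-\Gamma-\Gamma'$ is a forest, and a forest has at most one perfect matching.

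Counting then goes as follows. Subgraphs containing $\Gamma$ but not $\Gamma'$ come from perfect matchings of $B-\Gamma$. That graph is unicyclic with $\Gamma'$ independent in it, so it has exactly two perfect matchings, by Lemma \ref{mperf} plus the forest argument applied to $B-\Gamma-\Gamma'$; this gives $2$. The count for $\Gamma'$ is symmetric, giving $2$. Subgraphs containing both cycles correspond to perfect matchings of the forest $B-\Gamma-\Gamma'$, giving $1$. For perfect matchings of $B$ itself, I will fix a perfect matching and form symmetric differences with any other: by the argument in Lemma \ref{mperf}, the difference is a union of disjoint even alternating cycles, hence a subset of $\{\Gamma,\Gamma'\}$. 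Each of $\Gamma$ and $\Gamma'$ can be flipped independently, because each is alternating with respect to the matching induced from the unique matching of the forest together with matchings of the cycles. This gives $2\cdot 2=4$ perfect matchings.

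The main obstacle is the vertex-disjointness and alternation step. I must show that the perfect matching of $B$ restricts to a perfect matching on each cycle, so that no matching edge leaves $\Gamma$. This uses independence: since $B-\Gamma$ has a perfect matching, combining it with one of the two perfect matchings of $\Gamma$ gives a perfect matching of $B$ in which $\Gamma$ is alternating. Uniqueness of the forest matching then forces all perfect matchings to arise this way.
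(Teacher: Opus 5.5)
\textbf{There is a genuine gap.} Every one of your counts $4,2,2,1$ depends on a fact you never establish: that $B-(\Gamma\cup\Gamma')$ \emph{has} a perfect matching. Equivalently, some single perfect matching of $B$ must make both $\Gamma$ and $\Gamma'$ alternating.
\begin{itemize}
\item The forest observation only shows that $B-(\Gamma\cup\Gamma')$ has \emph{at most} one perfect matching.
\item The phrase ``$B-\Gamma$ is unicyclic with $\Gamma'$ independent in it'' is asserted, not derived.
\item Your last paragraph only produces a perfect matching $\M_1\cup N$ in which $\Gamma$ alternates. Nothing there forces $\Gamma'$ to alternate with respect to the same matching.
\item Your ``central structural claim'' that $B-\Gamma$ and $B-\Gamma'$ have \emph{unique} perfect matchings is false; each has exactly two. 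Your own next paragraph says so, so the write-up contradicts itself.
\item The remark that ``uniqueness requires checking that $\Gamma'$ is not independent in $B-\Gamma$'' points the wrong way. The lemma needs $\Gamma'$ to \emph{be} independent in $B-\Gamma$.
\end{itemize}
The disjointness step is also only a hedge. Your parity count on the component of $B-\Gamma$ containing $\Gamma'-v$ works only once you note that the trees hanging off $\Gamma'-v$ have even total order. That is exactly where the hypothesis that $B-\Gamma'$ has a perfect matching enters. Finally, deriving $|\Gamma|$ even via Lemma~\ref{mperf} and Remark~\ref{rem1} is circular, because direction (1)$\Rightarrow$(2) of Lemma~\ref{mperf} already presupposes an even cycle. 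Instead, compare the parities of $|V(B)|$ and $|V(B-\Gamma)|$ directly.

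\textbf{How to close the gap.} The symmetric-difference tool you already use is enough.
\begin{enumerate}
\item Let $\M_1,\M_2$ be perfect matchings of $B-\Gamma$ and $B-\Gamma'$, and $N,N'$ perfect matchings of $\Gamma,\Gamma'$. Set $M=\M_1\cup N$ and $M'=\M_2\cup N'$; both are perfect matchings of $B$.
\item $M\Delta M'$ is a disjoint union of cycles of $B$. The only cycles of $B$ are $\Gamma$ and $\Gamma'$, and they are edge-disjoint, so $M\Delta M'$ is one of $\emptyset$, $E(\Gamma)$, $E(\Gamma')$, $E(\Gamma)\cup E(\Gamma')$.
\item Since $\M_2$ avoids $V(\Gamma')$, we have $M'\cap E(\Gamma')=N'$. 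Step 2 then forces $M\cap E(\Gamma')\in\{N',\,E(\Gamma')\setminus N'\}$, so $M$ contains a perfect matching of $\Gamma'$.
\item If the cycles shared a vertex $v$, then $M$ would contain an edge of $\Gamma$ at $v$ (from $N$) and a different edge of $\Gamma'$ at $v$. That is impossible for a matching, so $\Gamma$ and $\Gamma'$ are vertex-disjoint.
\item Removing that perfect matching of $\Gamma'$ from $\M_1$ leaves a perfect matching of $B-(\Gamma\cup\Gamma')$.
\end{enumerate}
With existence from step 5 and uniqueness from the forest argument, your flipping and symmetric-difference counting goes through and gives $4+2+2+1=9$.
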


	\begin{lem}\label{cor2}
		Let $B\in\B(\infty)$ with more than one perfect matching. Then $\det A(B)\in \{0,\pm 4,\pm 16\}$ and $B$ is not unimodular.
	\end{lem}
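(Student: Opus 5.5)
The plan is to split according to how many of the two cycles of $B$ are independent, and to read off $\det A(B)$ from Remark \ref{det2} in each case. In an $\infty$-type graph there are exactly two cycles $\Gamma$ and $\Gamma'$, and they are edge-disjoint. Since $B$ has more than one perfect matching, Lemma \ref{mperf} provides an even cycle whose removal leaves a graph with a perfect matching. So at least one of $\Gamma,\Gamma'$ is independent, and by Remark \ref{rem1} every independent cycle of $B$ is even.

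\emph{Case 1: exactly one of the cycles is independent.} Lemma \ref{notuni} applies directly and gives $\det A(B)\in\{0,\pm4\}$.

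\emph{Case 2: both $\Gamma$ and $\Gamma'$ are independent.} Lemma \ref{2ind} gives the data for Remark \ref{det2}:
\[
m_0=4,\qquad m_{1}=m_{1}'=2,\qquad m_2=1.
\]
Put $a=(-1)^{|\Gamma|/2}$ and $b=(-1)^{|\Gamma'|/2}$; both are $\pm1$ because both cycles are even. The formula becomes
\[
\det A(B)=(-1)^{\frac n2}\bigl[4-4a-4b+4ab\bigr]=4(-1)^{\frac n2}(1-a)(1-b).
\]
Since each factor $1-a$ and $1-b$ lies in $\{0,2\}$, we get $\det A(B)\in\{0,\pm16\}$.

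The only point needing care is how Lemma \ref{2ind} feeds into Remark \ref{det2}. The term with $m_2$ requires a spanning elementary subgraph containing both cycles. That term only arises when $\Gamma$ and $\Gamma'$ are vertex-disjoint, and Lemma \ref{2ind} asserts such a subgraph exists in Case 2, so I will take its count as given.

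Combining the two cases gives $\det A(B)\in\{0,\pm4,\pm16\}$. None of these values is $\pm1$, so $B$ is not unimodular.
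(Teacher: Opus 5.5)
Your proof is correct and follows the paper's argument: the same split into one versus two independent cycles, with Lemma~\ref{notuni} handling the first case and Lemma~\ref{2ind} combined with Remark~\ref{det2} handling the second. Writing the second-case value as $4(-1)^{n/2}(1-a)(1-b)$ is only a cleaner way to read off the paper's expression, and it also shows the value is $\pm16$ exactly when both cycles have length congruent to $2 \pmod 4$.
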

	\begin{proof}
		Since $B$ has more than one perfect matching, it contains at least one independent cycle. Suppose first that $B$ has exactly one independent cycle $\Gamma$. Then, by Lemma~\ref{notuni}, $\det A(B)\in\{0, \pm 4\}$.
		
		Now suppose that both $\Gamma$ and $\Gamma'$ are independent cycles in $B$. Then, by Lemma~\ref{2ind} and Remark~\ref{det2},
		\[
		\begin{split}
			\det A(B)
			&=(-1)^{\frac{n}{2}}
			\left[
			4
			-4(-1)^{\frac{|\Gamma|}{2}}
			-4(-1)^{\frac{|\Gamma'|}{2}}
			+4(-1)^{\frac{|\Gamma|+|\Gamma'|}{2}}
			\right]\\
			&\in\{0,\pm 16\}.
		\end{split}
		\]
	\end{proof}

	%Observe that $B$ has exactly three spanning elementary subgraphs, the two perfect matchings and $\Gamma_1\cup \M_0$. The number edges in each perfect matchings in $B$ equals $\frac{n}{2}$, whereas $|\M_0|=\frac{n-|\Gamma|}{2}$. Hence, $\det A(B)=2^0(-1)^{\frac{n}{2}}+2^0(-1)^{\frac{n}{2}}+2^1(-1)^{1+m+\frac{n-|\Gamma|}{2}}=2\times (-1)^{\frac{n}{2}}\left[1-(-1)^\frac{|\Gamma_1|}{2}\right]\in \{0,\pm 4\}$. it is not difficult to see that $B$ has four perfect matchings, two spanning elementary subgraphs containing $\Gamma_1$, two containing $\Gamma_2$, and one containing both $\Gamma_1$ and $\Gamma_2$. Hence, $\det A(B)=4\times (-1)^\frac{n}{2}+4\times (-1)^{1+\frac{|\Gamma_1|}{2}}+4\times(-1)^{1+\frac{|\Gamma_2|}{2}}+4\times(-1)^{2+\frac{|\Gamma_1|+|\Gamma_2|}{2}}\in\{0,\pm 16\}$.

	It is natural to ask \textit{Does there even exists a unimodular $\infty-$type bicyclic graph?} Example \ref{iuni} shows that there is such a graph. Notice that $B$ possess exactly one perfect matching.
	
	\begin{example}\label{iuni}
		An example of $\infty-$type unimodular graph.
		\begin{figure}[H]
			\begin{center}
				\begin{tikzpicture}
					\SetVertexStyle[MinSize=0.1,FillColor=black]
					\Vertex[]{1}
					\Vertex[x=-1.4]{2}
					\Vertex[x=-0.7,y=0.7]{3}
					\Vertex[x=-0.7,y=-0.7]{4}
					\Vertex[x=1]{5}
					\Vertex[x=1.8,y=0.5]{6}
					\Vertex[x=1.8,y=-0.5]{7}
					\Vertex[x=-0.7,y=1.7]{8}
					
					\Edge(1)(3)
					\Edge(1)(4)
					\Edge(4)(2)
					\Edge(3)(2)
					\Edge(3)(8)
					\Edge(1)(5)
					\Edge(5)(6)
					\Edge(6)(7)
					\Edge(7)(5)
				\end{tikzpicture}
			\end{center}\caption{A unimodular graph in $\B(\infty)$.}\label{uniinf}
		\end{figure}
	\end{example}

	The following lemma is obvious.
	
	\begin{lem}\label{iuniqueperuni}
		If $B\in\B(\infty)$ is unimodular, then $B$ has exactly one perfect matching.
	\end{lem}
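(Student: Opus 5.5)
The statement follows by combining Lemma~\ref{nonperf} with Lemma~\ref{cor2}, so the plan is a short contrapositive case split on the number of perfect matchings of $B$. Assume $B\in\B(\infty)$ is unimodular, so that $\det A(B)\in\{-1,1\}$, and suppose for contradiction that $B$ does not have exactly one perfect matching. Then either $B$ has no perfect matching, or $B$ has at least two.

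In the first case, Lemma~\ref{nonperf} applies directly, since it holds for all of $\B$ and in particular for $\B(\infty)$. It says that a bicyclic graph without a perfect matching is not unimodular. The underlying reason is that every spanning elementary subgraph then contains a cycle, so $\det A(B)$ is even by Remark~\ref{det2}. This contradicts our assumption.

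In the second case, Lemma~\ref{cor2} gives $\det A(B)\in\{0,\pm4,\pm16\}$. This set is disjoint from $\{-1,1\}$, which again contradicts unimodularity. Both cases are impossible, so $B$ has exactly one perfect matching.

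There is no real obstacle here, because the substantive counting has already been done in Lemmas~\ref{1ind}, \ref{2ind} and \ref{cor2}. The only point needing care is that Lemma~\ref{cor2} covers every graph in $\B(\infty)$ with more than one perfect matching. Its proof relies on two facts. First, Lemma~\ref{mperf} guarantees at least one independent cycle. Second, an $\infty$-type graph has exactly two cycles, so there is either one independent cycle or two. Both of these are already part of that lemma's statement, so the proof of the present lemma is just the two-case dichotomy above.
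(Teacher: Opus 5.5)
Your proposal is correct and follows the paper's proof: Lemma~\ref{nonperf} rules out the case of no perfect matching, and Lemma~\ref{cor2} rules out more than one, since $\{0,\pm4,\pm16\}$ does not meet $\{\pm1\}$.
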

	\begin{proof}
		It follows from Lemma \ref{nonperf} that $B$ has at least one perfect matching and from Lemma \ref{cor2} $B$ has exactly one perfect matching.
	\end{proof}

	We naturally wonder whether every $B\in\B(\infty)$ having a unique perfect matching is unimodular. Example \ref{ex1} already says that the answer is, in fact, \textit{negative}. Consequently, when a graph in $\B(\infty)$ has a unique perfect matching, it is essential to determine the circumstances in which it is unimodular.
	
	\begin{lem}\label{iuniq}
		Let $B\in\B(\infty)$ be a graph with exactly one perfect matching, and let $\Gamma$ and $\Gamma'$ be cycles in $B$. If $B-(\Gamma\cup\Gamma')$ has a perfect matching or is empty, then $\det A(B)\in \{\pm3,\pm5\}$.
	\end{lem}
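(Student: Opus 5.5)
The plan is to evaluate $\det A(B)$ with Remark \ref{det2}, after pinning down exactly which spanning elementary subgraphs $B$ has. Since $B$ has exactly one perfect matching, $m_0=1$. By Corollary \ref{cor1}, $B$ contains no independent cycle. A spanning elementary subgraph whose only cycle component is $\Gamma$ would make $B-\Gamma$ have a perfect matching, that is, make $\Gamma$ independent. So $m_{1i}=0$ for every $i$, and the only possible extra contribution is the term $4m_2(-1)^{(|\Gamma|+|\Gamma'|)/2}$.

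Next I will show that $\Gamma$ and $\Gamma'$ are vertex-disjoint and both odd. Suppose first that they share a vertex $v$ (in $\B(\infty)$ they share at most one). Then $|V(\Gamma\cup\Gamma')|=|\Gamma|+|\Gamma'|-1$. Because $n$ is even and $B-(\Gamma\cup\Gamma')$ has a perfect matching or is empty, $|\Gamma|+|\Gamma'|$ must be odd, so one cycle is even, say $\Gamma$. Then $\Gamma'-v$ is a path on the even number $|\Gamma'|-1$ of vertices, so it has a perfect matching. Together with the perfect matching of $B-(\Gamma\cup\Gamma')$, this gives a perfect matching of $B-\Gamma$, so $\Gamma$ is independent, a contradiction.

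Hence the cycles are disjoint and $|\Gamma|+|\Gamma'|$ is even. If both were even, then $\Gamma'$ (which has a perfect matching) together with the matching of $B-(\Gamma\cup\Gamma')$ would give a perfect matching of $B-\Gamma$, again making $\Gamma$ independent. So both cycles are odd.

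Now I count $m_2$. Since $\Gamma$ and $\Gamma'$ are disjoint, every perfect matching of $B-(\Gamma\cup\Gamma')$ gives a spanning elementary subgraph containing both cycles, so $m_2\ge 1$. The graph $B-(\Gamma\cup\Gamma')$ is acyclic, because $\Gamma,\Gamma'$ are the only cycles of an $\infty$-type graph and any other cycle would lie in $B$. Lemma \ref{mperf} then says it cannot have two perfect matchings, since that would require an even cycle. So $m_2=1$ (this also covers the empty case).

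Remark \ref{det2} then gives
\[
\det A(B)=(-1)^{\frac n2}\left[1+4(-1)^{\frac{|\Gamma|+|\Gamma'|}{2}}\right]\in\{\pm3,\pm5\}.
\]
I expect the main obstacle to be the structural step ruling out a shared vertex and two even cycles. It needs care with parity and with how the cycle remnants are matched; the rest is bookkeeping.
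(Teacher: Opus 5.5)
Your proof is correct. Its outline matches the paper's: show $\Gamma$ and $\Gamma'$ are vertex-disjoint, deduce that the unique perfect matching and $\M_1\cup\Gamma\cup\Gamma'$ are the only spanning elementary subgraphs, and read off the determinant from Remark~\ref{det2}.

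The difference is in the disjointness step. The paper uses the same parity count to get an odd cycle $\Gamma$. It then argues with an alternating $\M$/$\M_1$ path that it claims is infinite. As written this is loose: the $\M$-partner $j'$ of $j$ may lie on $\Gamma'$, where $\M_1$ does not reach, and such an alternating walk just stops when it re-enters $\Gamma\cup\Gamma'$. Your argument is shorter and fully rigorous. You take the even cycle and match $V(B)\setminus V(\Gamma)$ by the even path $\Gamma'-v$ together with the matching of $B-(\Gamma\cup\Gamma')$. This contradicts Corollary~\ref{cor1} directly.

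You also justify something the paper only asserts, namely that $B$ has exactly two spanning elementary subgraphs. You get $m_{1i}=0$ from Corollary~\ref{cor1}. You get $m_2=1$ because $B-(\Gamma\cup\Gamma')$ is a forest and hence, by Lemma~\ref{mperf}, has at most one perfect matching.

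Your step showing that both cycles are odd is true but not needed. The final formula only uses that $|\Gamma|+|\Gamma'|$ is even, and disjointness already gives that.
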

	\begin{proof}
		Let's denote the perfect matching in $B$ by $\M$, and let $\M_1$ be the perfect matching in $B - (\Gamma \cup \Gamma')$ (where $\M_1$ is empty if $B - (\Gamma \cup \Gamma')$ is empty). Both $|V(B)|$ and $|V(B - (\Gamma \cup \Gamma'))|$ are even, since each admits a perfect matching. This means that $\Gamma$ and $\Gamma'$ are vertex-disjoint; if they were not, then one of them—say $\Gamma$—would be odd, as $|V(B - (\Gamma \cup \Gamma'))| = |V(B)| - |V(\Gamma \cup \Gamma')| = |V(B)| - (|\Gamma| + |\Gamma'| - 1)$. Consequently, there exists vertices $j \in V(\Gamma)$ and $j' \notin V(\Gamma)$ for which $[j, j'] \in \M$. Since $j' \in V(\M_1)$, we find a vertex $j_1$ such that $[j', j_1] \in \M_1$. By continuing this process alternately for $\M$ and $\M'$, we would obtain an infinite path $[j, j', j_1, j_1', \dots]$ in $B$, which is impossible.
		
		Thus, $\Gamma$ and $\Gamma'$ are vertex-disjoint. Hence, we find another spanning elementary subgraph $H = \M_1 \cup \Gamma \cup \Gamma'$ of $B$, which means that $B$ possesses precisely two spanning elementary subgraphs. By Remark \ref{det2}, we have $\det A(B)=(-1)^\frac{n}{2}\left[1+4\times(-1)^{\frac{|\Gamma|+|\Gamma'|}{2}}\right]\in \{\pm3,\pm5\}$.
	\end{proof}

	The following theorem gives complete characterization of unimodular graphs in $B\in\B(\infty)$.
	
	\begin{thm}\label{thm1}
		Let $B\in\B(\infty)$ with cycles $\Gamma$ and $\Gamma'$. Then $B$ is unimodular if and only if $B$ satisfies following conditions.
		\begin{enumerate}
			\item $B$ possess a unique perfect matching, and
			\item $B-(\Gamma\cup\Gamma')$ is neither empty nor possesses a perfect matching.
		\end{enumerate}
	\end{thm}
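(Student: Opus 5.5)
Necessity is already essentially available. If $B$ is unimodular, Lemma \ref{iuniqueperuni} gives that $B$ has exactly one perfect matching, which is condition 1. For condition 2, suppose $B-(\Gamma\cup\Gamma')$ were empty or had a perfect matching. Then Lemma \ref{iuniq} would give $\det A(B)\in\{\pm3,\pm5\}$, contradicting unimodularity. So both conditions are necessary.

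For sufficiency, assume conditions 1 and 2 hold and let $\M$ be the unique perfect matching. I will list all spanning elementary subgraphs $H$ of $B$ and show that $\M$ is the only one. Since $B\in\B(\infty)$, its only cycles are $\Gamma$ and $\Gamma'$. (The cycles are edge-disjoint, so no third cycle can arise from their union.) Hence every spanning elementary subgraph other than a perfect matching falls into one of two cases.
\begin{enumerate}
\item It contains exactly one of the cycles, say $\Gamma$, as its only cycle component. Its remaining components are $K_2$'s forming a perfect matching of $B-\Gamma$, so $\Gamma$ is an independent cycle. This contradicts Corollary \ref{cor1}, because $B$ has a unique perfect matching.
\item It contains both $\Gamma$ and $\Gamma'$ as components. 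Then $\Gamma$ and $\Gamma'$ are vertex-disjoint, and the rest of $H$ is a perfect matching of $B-(\Gamma\cup\Gamma')$, or $B-(\Gamma\cup\Gamma')$ is empty. Either alternative is excluded by condition 2.
\end{enumerate}
So $m_{1i}=m_2=0$ and $m_0=1$, and Remark \ref{det2} gives $\det A(B)=(-1)^{n/2}=\pm1$. Thus $B$ is unimodular.

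The main point to check carefully is the claim that an $\infty$-type bicyclic graph contains exactly two cycles, so that the sum in Remark \ref{det2} has only the terms considered above. I would justify it directly. The cycle space of a bicyclic graph has dimension $2$, so any third cycle would have to be the edge set $\Gamma\Delta\Gamma'$. When $\Gamma$ and $\Gamma'$ are edge-disjoint, this set is $\Gamma\cup\Gamma'$. Its union has a vertex of degree $4$ if the two cycles share a vertex, and is disconnected otherwise, so in neither case is it a cycle. The remaining ingredients are just the lemmas cited above, so no further obstacle is expected.
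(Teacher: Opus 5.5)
Your proposal is correct and follows the paper's proof. Necessity comes from Lemma~\ref{iuniqueperuni} and Lemma~\ref{iuniq}, and sufficiency comes from showing that the unique perfect matching is the only spanning elementary subgraph, so $\det A(B)=(-1)^{n/2}$. You are more explicit than the paper, which only notes that neither cycle is independent; it leaves implicit both the use of condition~2 to exclude a spanning elementary subgraph containing $\Gamma$ and $\Gamma'$ together and the fact that an $\infty$-type graph has exactly two cycles.
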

	\begin{proof}
		If $B$ satisfies the given conditions, then the unique perfect matching is the only spanning elementary subgraph in $B$, as neither $\Gamma$ nor $\Gamma'$ is independent by Lemma \ref{mperf}. Consequently, $\det A(B)=(-1)^\frac{n}{2}=\pm1$.
		
		The converse follows from Lemma \ref{iuniqueperuni} and Lemma \ref{iuniq}.
	\end{proof}

	\begin{cor}\label{cor3}
		Let $B\in\B(\infty)$ be a graph with a unique perfect matching. If two cycles in $B$ have a vertex in common, then $B$ is unimodular. 
	\end{cor}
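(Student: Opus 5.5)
We will derive Corollary~\ref{cor3} directly from Theorem~\ref{thm1}. Condition~1 of that theorem holds by hypothesis, so the only thing to check is condition~2: that $B-(\Gamma\cup\Gamma')$ is neither empty nor has a perfect matching.

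The key step is a parity count. Suppose $\Gamma$ and $\Gamma'$ share a vertex. In an $\infty$-type graph the two cycles are edge-disjoint, so they share exactly one vertex, and
\[
|V(\Gamma\cup\Gamma')| = |\Gamma|+|\Gamma'|-1 .
\]
Since $B$ has a perfect matching, $n$ is even, and therefore
\[
|V(B-(\Gamma\cup\Gamma'))| = n-|\Gamma|-|\Gamma'|+1 .
\]
If $B-(\Gamma\cup\Gamma')$ had a perfect matching, or were empty, this number would be even, which forces $|\Gamma|+|\Gamma'|$ to be odd. So exactly one of the two cycles, say $\Gamma$, is odd and the other is even.

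To rule this case out, we will reuse the alternating-path argument from the proof of Lemma~\ref{iuniq}. Let $\M$ be the unique perfect matching of $B$, and let $\M_1$ be a perfect matching of $B-(\Gamma\cup\Gamma')$ (empty if that graph is empty). The vertex set $V(\Gamma\cup\Gamma')$ has odd size, so $\M$ cannot match it within itself. Hence some vertex $j\in V(\Gamma\cup\Gamma')$ is matched by $\M$ to a vertex $j'$ outside it. Now alternate: $j'$ is covered by $\M_1$; its $\M_1$-partner is covered by $\M$; and so on. Because $\M_1$ never covers vertices of $\Gamma\cup\Gamma'$, this walk never returns to $\Gamma\cup\Gamma'$ through an $\M_1$-edge. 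It also cannot revisit a vertex outside, since each vertex has exactly one partner in each matching. This produces an infinite path in a finite graph, a contradiction. In the empty case the contradiction is immediate: there is no vertex available for $j'$.

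It follows that $B-(\Gamma\cup\Gamma')$ is nonempty and has no perfect matching, so Theorem~\ref{thm1} gives that $B$ is unimodular. The only delicate point is the alternating-path argument. It can be stated cleanly through the symmetric difference: $\M\,\Delta\,\M_1$ has maximum degree at most $2$, and every vertex outside $\Gamma\cup\Gamma'$ has degree $0$ or $2$ in it. The component containing $j$ is therefore a path whose other end must also lie in $\Gamma\cup\Gamma'$, and we will check carefully that this still yields a contradiction. The cleanest contradiction is a direct appeal to Lemma~\ref{iuniq}: its proof shows that the hypothesis on $B-(\Gamma\cup\Gamma')$ forces the two cycles to be vertex-disjoint, which contradicts our assumption that they share a vertex.
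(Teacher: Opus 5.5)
\textbf{There is a genuine gap.} Your parity count is fine, but the next step contradicts it. Once $|\Gamma|+|\Gamma'|$ is odd, $|V(\Gamma\cup\Gamma')|=|\Gamma|+|\Gamma'|-1$ is \emph{even}, not odd. So nothing forces $\M$ to match a vertex of $\Gamma\cup\Gamma'$ to the outside.

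Passing to the odd cycle $\Gamma$ alone, as the proof of Lemma~\ref{iuniq} does, does not rescue the step. The vertex of $\Gamma$ that $\M$ matches off $\Gamma$ can be the shared vertex $v$, with its partner on $\Gamma'$. That partner is not covered by $\M_1$, so the alternating walk never starts.

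More fundamentally, your argument never uses that $\M$ is unique, and without uniqueness the intermediate claim is false. Take a triangle and a $4$-cycle glued at one vertex. This graph lies in $\B(\infty)$ and has a perfect matching. Its cycles share a vertex, and $B-(\Gamma\cup\Gamma')$ is empty. Your symmetric-difference observation does not help either. A path with interior outside $\Gamma\cup\Gamma'$ joining two of its vertices would create a third cycle, so the observation only shows that $\M$ matches $V(\Gamma\cup\Gamma')$ internally, which is no contradiction.

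Your fallback, citing the proof of Lemma~\ref{iuniq} for vertex-disjointness, is exactly the paper's one-line proof. However, that proof has the same hole: its step ``$j'\in V(\M_1)$'' fails in the example above. So citing it does not close the gap.

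The missing ingredient is uniqueness, used through independent cycles. Suppose $\Gamma$ is odd, $\Gamma'$ is even and $V(\Gamma)\cap V(\Gamma')=\{v\}$. Then $\Gamma-v$ is a path on $|\Gamma|-1$ vertices, an even number. A perfect matching of this path together with $\M_1$ (empty in the empty case) is a perfect matching of $B-\Gamma'$. Thus $\Gamma'$ is an independent cycle, contradicting Corollary~\ref{cor1}, and condition~2 of Theorem~\ref{thm1} holds.

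Shorter still, you can skip Theorem~\ref{thm1}. Since the cycles share a vertex, no spanning elementary subgraph contains both. By Corollary~\ref{cor1}, none contains exactly one. Hence $\M$ is the only spanning elementary subgraph and $\det A(B)=(-1)^{n/2}$, exactly as in Lemma~\ref{tuniquni}.
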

	\begin{proof}
		From the proof of Lemma \ref{iuniq}, it follows that if $\Gamma$ and $\Gamma'$ share a vertex then there is no perfect matching in $B - (\Gamma \cup \Gamma')$.
	\end{proof}

	\begin{cor}\label{cor4}
		Let $B\in\B(\infty)$ be a graph with a unique perfect matching. If $B$ has an even cycle, then $B$ is unimodular.
	\end{cor}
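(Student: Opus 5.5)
We reduce the statement to Theorem \ref{thm1}. Since $B$ has a unique perfect matching, condition 1 of that theorem holds, so it suffices to check condition 2: $B-(\Gamma\cup\Gamma')$ is neither empty nor has a perfect matching. We argue by contradiction. Suppose $B-(\Gamma\cup\Gamma')$ is empty or has a perfect matching. Then we are in the situation of Lemma \ref{iuniq}, and the first part of its proof applies verbatim: $\Gamma$ and $\Gamma'$ are vertex-disjoint. Recall why. If they shared a vertex, then $|V(\Gamma\cup\Gamma')|=|\Gamma|+|\Gamma'|-1$, and both $|V(B)|$ and $|V(B-(\Gamma\cup\Gamma'))|$ are even. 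This forces one cycle, say $\Gamma$, to have odd length. The alternating-path argument between $\M$ and $\M_1$ then gives an infinite path, which is impossible. This is exactly the content of Corollary \ref{cor3}, so we may in fact invoke Corollary \ref{cor3} and assume $\Gamma$ and $\Gamma'$ are vertex-disjoint.

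Next we show both cycles must be odd, contradicting the hypothesis that $B$ has an even cycle. Since $\Gamma$ and $\Gamma'$ are vertex-disjoint, their union $\Gamma\cup\Gamma'$ together with $\M_1$ is a spanning elementary subgraph $H$. The parity count gives $|V(B)|-|\Gamma|-|\Gamma'|$ even, so $|\Gamma|$ and $|\Gamma'|$ have the same parity. Suppose they are both even, say $\Gamma$ is even. Then $\Gamma'$ is also even, and the matching $\M_1$ together with a perfect matching of the even cycle $\Gamma'$ is a perfect matching of $B-\Gamma$. Hence $\Gamma$ is an independent cycle, and it is even. By Lemma \ref{mperf}, $B$ then has more than one perfect matching (equivalently, this contradicts Corollary \ref{cor1}). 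This contradicts uniqueness.

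It remains to handle the case where the even cycle is not $\Gamma$ or $\Gamma'$. In an $\infty$-type graph the cycles are edge-disjoint, and with $m+1$ edges the cycle space has dimension $2$. So $\Gamma$ and $\Gamma'$ are the only cycles: any third cycle would have to use edges of both, which is impossible when they share at most one vertex. Thus ``$B$ has an even cycle'' means $\Gamma$ or $\Gamma'$ is even, and the previous step applies.

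The contradiction shows that $B-(\Gamma\cup\Gamma')$ is nonempty and has no perfect matching. Theorem \ref{thm1} then gives that $B$ is unimodular. The only delicate point is the parity argument forcing both cycles even once one is. It is here that the vertex-disjointness obtained from Corollary \ref{cor3}/Lemma \ref{iuniq} is essential.
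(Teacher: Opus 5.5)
Your proof is correct and follows essentially the paper's route. You reduce to Theorem \ref{thm1}, use Lemma \ref{iuniq}/Corollary \ref{cor3} to make $\Gamma$ and $\Gamma'$ vertex-disjoint, and use the parity count to force both cycles even; a perfect matching of $B-(\Gamma\cup\Gamma')$ plus matchings of the cycles then contradicts uniqueness. The paper writes out the two distinct perfect matchings directly, whereas you observe that $\Gamma$ is an independent even cycle and cite Lemma \ref{mperf}/Corollary \ref{cor1}; this is the same argument, and your explicit handling of the empty case and of why $\Gamma,\Gamma'$ are the only cycles is extra care the paper leaves implicit.
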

	\begin{proof}
		Let $\Gamma$ and $\Gamma'$ be the cycles in $B$, with $\Gamma$ assumed to be an even cycle. By Corollary \ref{cor1}, we can assume that $V(\Gamma)\cap V(\Gamma')=\emptyset$. Observe that if $B - V(\Gamma \cup \Gamma')$ has a perfect matching, then $\Gamma'$ is an even cycle, since the orders of $B$, $B - V(\Gamma \cup \Gamma')$, and $\Gamma$ are all even. Consequently, a perfect matching in $B - V(\Gamma \cup \Gamma')$, combined with a perfect matching in $\Gamma$ and two perfect matchings in $\Gamma'$, would yield two distinct perfect matchings in $B$, contradicting the assumption that $B$ has exactly one perfect matching. Therefore, $B - V(\Gamma \cup \Gamma')$ does not possess a perfect matching. By Theorem \ref{thm1}, the conclusion holds.
	\end{proof}

%	\begin{proof}
%		Let $\Gamma$ and $\Gamma'$ be a cycle in $B$. Observe that $V(\Gamma)\cap V(\Gamma')$ is empty, otherwise any spanning elementary subgraph can have at most one cycle and since a graph with a unique perfect matching does not contain any independent cycle, $B$ has exactly one spanning elementary subgraph $\M$. This implies that $\det A(B)=\pm1$, which contradicts that $B$ is not unimodular. Now, assume that $\Gamma$ is an even cycle and $\Gamma'$ is odd, then $B-V(\Gamma\cup\Gamma')$ has odd number of vertices and therefore does not possess any perfect matching, contradicting Theorem \ref{thm1}. Therefore $\Gamma$ and $\Gamma'$ both are even. If $B-V(\Gamma\cup\Gamma')$ has a perfect matching $\M_0$, then $\M_0$ along with two perfect matchings from $\Gamma$ and $\Gamma'$ is four perfect matchings in $B$ contradicting that $B$ has unique perfect matching. Hence $\Gamma$ and $\Gamma'$ both are odd. 
%	\end{proof}

	\subsection{Unimodularity in $\B(\theta)$}
	
		In this subsection, we continue our investigation of unimodular graphs and identify unimodular $\theta-$type graphs. Contrary to $\infty-$type graphs, we have the following lemma for graphs $B\in\B(\theta)$.
		
		\begin{lem}\label{tuniquni}
			Let $B\in\B(\theta)$. If $B$ possesses a unique perfect matching, then it is unimodular.
		\end{lem}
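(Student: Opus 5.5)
We will show that when $B\in\B(\theta)$ has a unique perfect matching $\M$, the only spanning elementary subgraph of $B$ is $\M$ itself. Remark \ref{det2} then gives $\det A(B)=(-1)^{\frac{n}{2}}=\pm1$.

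The first step is a structural observation about $\theta$-type graphs. Here $B$ contains exactly three cycles, and any two of them share at least two vertices; in fact they share the whole common path of the theta subgraph. Hence no two cycles of $B$ are vertex-disjoint. A spanning elementary subgraph is a disjoint union of $K_2$'s and cycles, so it contains at most one cycle component. In the notation of Remark \ref{det2}, this means $m_2=0$.

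The second step handles spanning elementary subgraphs with exactly one cycle. Suppose $H$ is such a subgraph and its cycle component is $\Gamma_i$. The remaining components of $H$ are $K_2$'s covering $V(B)\setminus V(\Gamma_i)$, so they form a perfect matching of $B-\Gamma_i$. Thus $\Gamma_i$ is an independent cycle in the sense of Definition \ref{defn1}. Since $B$ has a unique perfect matching, Corollary \ref{cor1} says $B$ has no independent cycle. This is a contradiction, so $m_{1i}=0$ for every cycle $\Gamma_i$ of $B$.

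The last step combines these. The spanning elementary subgraphs of $B$ are then exactly its perfect matchings, so $m_0=1$. Remark \ref{det2} (equivalently Lemma \ref{det}) gives $\det A(B)=(-1)^{n/2}\in\{\pm1\}$, and $B$ is unimodular.

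The only point needing care is the first step: that no two cycles in a $\theta$-type graph are vertex-disjoint. This follows directly from the fact stated in the introduction that any two cycles of a $\theta$-type graph share at least two vertices. Once that is in place, everything else is an immediate application of Corollary \ref{cor1}.
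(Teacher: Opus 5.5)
Your proposal is correct and matches the paper's proof: no two cycles of a $\theta$-type graph are vertex-disjoint, so each spanning elementary subgraph has at most one cycle; Corollary \ref{cor1} then rules out any cycle component; and Remark \ref{det2} gives $\det A(B)=(-1)^{n/2}$. You are slightly more explicit than the paper in verifying that a single cycle component forces that cycle to be independent, a step the paper leaves implicit.
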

		\begin{proof}
			Denote the unique perfect matching in $B$ by $\M$. Since any two cycles in $B$ share an edge, any spanning elementary subgraph of $B$ contains at most one cycle. By Corollary \ref{cor1}, $B$ does not contain any independent cycle and, consequently, has no spanning elementary subgraph that includes a cycle. Thus, $B$ has no spanning elementary subgraph other than $\M$. Therefore, from Remark \ref{det2}, we have $\det A(B) = (-1)^{\frac{n}{2}} = \pm 1$.
		\end{proof}
	
		Naturally, one might wonder whether the converse holds. However, as demonstrated by Example \ref{ex1}, the answer is clearly \textit{no}. This observation highlights the need for a deeper investigation into the unimodularity of $B\in\B(\theta)$ when $B$ possesses more than one perfect matching.
		
		\begin{rem}\label{paths}
			In a $\theta$-type bicyclic graph, there exist two vertices $i$ and $j$ along with three $i \leadsto j$ paths that are pairwise edge-disjoint. We denote these paths by $\mathcal{Q}_1$, $\mathcal{Q}_2$, and $\mathcal{Q}_3$. See Figure \ref{pdiag}. $B$ has three cycles given by $\Gamma_1=\mathcal{Q}_1\cup \mathcal{Q}_2$, $\Gamma_2=\mathcal{Q}_1\cup\mathcal{Q}_3$, and $\Gamma_3=\mathcal{Q}_2\cup\mathcal{Q}_3$.
		\end{rem}
		\begin{figure}[H]
			\begin{center}
				\begin{tikzpicture}
					\SetVertexStyle[MinSize=0.1,FillColor=black]
					\Vertex[label=$i$,position=left]{1}
					\Vertex[x=0.7]{2}
					\Vertex[x=1.4]{3}
					\Vertex[x=2.1,label=$j$,position=above]{4}
					\Vertex[x=0.3,y=-1]{5}
					\Vertex[x=1,y=-1]{6}
					\Vertex[x=1.7,y=-1]{7}
					\Vertex[x=0.7,y=1]{8}
					\Vertex[x=1.4,y=1]{9}
					\Vertex[x=3]{10}
					\Vertex[x=2.5,y=-1]{11}
					
					\Edge[color=red](1)(2)
					\Edge[color=red](2)(3)
					\Edge[color=red](3)(4)
					\Edge[color=green](1)(5)
					\Edge[color=green](5)(6)
					\Edge[color=green](6)(7)
					\Edge[color=green](7)(4)
					\Edge[color=blue](1)(8)
					\Edge[color=blue](8)(9)
					\Edge[color=blue](9)(4)
					\Edge(4)(10)
					\Edge(7)(11)
			\end{tikzpicture}
			\end{center}\caption{Paths described in Remark \ref{paths}, colored in red, blue and green.}\label{pdiag}
		\end{figure}
		
		By Remark \ref{paths}, it follows that if a path, say $\mathcal{Q}_1$, is even (resp. odd) and the other two paths are odd (resp. even), then $\Gamma_1$ and $\Gamma_2$ are odd, whereas $\Gamma_3$ is even. Similarly, if all the paths are even (or all are odd), then all cycles in $B$ are even. In particular, $B$ either contains exactly one even cycle or all its cycles are even.
		
		In view of Lemma~\ref{notuni}, it remains to consider the case where $B\in\B(\theta)$ has more than one independent cycle. The following lemma, proved in \cite{I}, describes all spanning elementary subgraphs of such graphs.

		\begin{lem}[\cite{I}, Lemma~21]\label{span}
			Let $B\in\B(\theta)$. If $B$ has more than one independent cycle, then $B$ has six spanning elementary subgraphs: three perfect matchings and one
			spanning elementary subgraph containing a cycle for each cycle in $B$.
		\end{lem}
%		\begin{proof}
%			Let $\Gamma_1$, $\Gamma_2$, and $\Gamma_3$ be the cycles in $B$. Since all the cycles in $B$ are independent, $B$ has a spanning elementary subgraph $H_i$ containing $\Gamma_i$ for each $i=1,2,3$. Furthermore, $B$ does not contain any other spanning elementary subgraph that contains a cycle. To count the number of perfect matchings in $B$, let $\mathcal{Q}_1 = [u, u_1, \dots, u_m, v]$, $\mathcal{Q}_2 = [u, v_1, \dots, v_n, v]$, and $\mathcal{Q}_3 = [u, w_1, \dots, w_p, v]$, where $m$, $n$, and $p$ are all even. Since all cycles are independent, $B - (\Gamma_1 \cup \Gamma_2 \cup \Gamma_3) = B - (\mathcal{Q}_1 \cup \mathcal{Q}_2 \cup \mathcal{Q}_3)$ has a perfect matching $\M$. It is clear that the following are three distinct perfect matchings in $B$:
%			
%			$\M \cup \{[u, u_1], \dots, [u_m, v]\} \cup \{[v_1, v_2], \dots, [v_{n-1}, v_n]\} \cup \{[w_1, w_2], \dots, [w_{p-1}, w_p]\}$,\\
%			$\M \cup \{[u_1, u_2], \dots, [u_{m-1}, u_m]\} \cup \{[u, v_1], \dots, [v_n, v]\} \cup \{[w_1, w_2], \dots, [w_{p-1}, w_p]\}$,\\
%			$\M \cup \{[u_1, u_2], \dots, [u_{m-1}, u_m]\} \cup \{[v_1, v_2], \dots, [v_{n-1}, v_n]\} \cup \{[u, w_1], \dots, [w_p, v]\}$.
%			Hence, $B$ has six spanning elementary subgraphs.
%		\end{proof}

	\begin{cor}\label{alodd}
		Let $B\in\B(\theta)$ be such that $B$ has a perfect matching. If $B$ has at least two independent cycles, then $|\mathcal{Q}_1|$, $|\mathcal{Q}_2|$, and $|\mathcal{Q}_3|$ are all odd.
	\end{cor}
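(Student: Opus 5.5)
We first pin down which cycles are even, and then convert that into the parity of the three paths. Suppose $B$ has two independent cycles, say $\Gamma_a$ and $\Gamma_b$ among $\Gamma_1,\Gamma_2,\Gamma_3$. Since $B$ has a perfect matching, $|V(B)|$ is even. Each of $B-\Gamma_a$ and $B-\Gamma_b$ has a perfect matching, so $|V(B-\Gamma_a)|$ and $|V(B-\Gamma_b)|$ are even, and hence $|\Gamma_a|$ and $|\Gamma_b|$ are even. By the observation following Remark~\ref{paths}, $B$ has either exactly one even cycle or all three cycles even. Since we already have two even cycles, all three cycles $\Gamma_1,\Gamma_2,\Gamma_3$ are even, which means that $|\mathcal{Q}_1|,|\mathcal{Q}_2|,|\mathcal{Q}_3|$ all have the same parity.

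It remains to rule out the case where all three paths are even. Take $\Gamma_a=\mathcal{Q}_r\cup\mathcal{Q}_s$ and let $\mathcal{Q}_t$ be the third path. The set $V(B-\Gamma_a)$ contains the interior vertices of $\mathcal{Q}_t$, and there are $|\mathcal{Q}_t|-1$ of them. Since $B-\Gamma_a$ is an induced subgraph on $V(B)\setminus V(\Gamma_a)$, each interior vertex of $\mathcal{Q}_t$ is matched in the perfect matching $\M_0$ of $B-\Gamma_a$. These vertices can be matched either among themselves or to vertices lying outside $\mathcal{Q}_1\cup\mathcal{Q}_2\cup\mathcal{Q}_3$ (pendant trees and the like). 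So a simple count shows nothing directly, and the argument has to use the structure of $B$.

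The key point is that $B$ is bicyclic, so every vertex outside $\Gamma_1\cup\Gamma_2\cup\Gamma_3$ lies in a tree hanging off the theta subgraph at a single attachment vertex. Let $T$ be such a hanging tree, and write $T'$ for $T$ with its attachment vertex removed. We would like $\M_0$ to restrict to a perfect matching of $T'$ for every such $T$. This holds because the only edge leaving $T'$ goes to its attachment vertex, and that vertex either lies on $\Gamma_a$ (so it is deleted in $B-\Gamma_a$) or is an interior vertex of $\mathcal{Q}_t$. In the second case we need an extra step, which is the main obstacle. We use the second independent cycle $\Gamma_b$, which contains $\mathcal{Q}_t$ and therefore removes that attachment vertex. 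Hence $B-\Gamma_b$ contains $T'$ with the attachment vertex deleted, so its perfect matching matches $T'$ internally, and $|V(T')|$ is even.

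Once every $T'$ has even order, the vertices of $B-\Gamma_a$ outside the $T'$ are exactly the $|\mathcal{Q}_t|-1$ interior vertices of $\mathcal{Q}_t$. Since $|V(B-\Gamma_a)|$ is even and each $|V(T')|$ is even, $|\mathcal{Q}_t|-1$ must be even, so $|\mathcal{Q}_t|$ is odd. All three paths have the same parity, so all are odd.

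If the hanging-tree step turns out to be delicate, we fall back on the following parity count. Hanging trees attached at $i$ or $j$ lie in both $B-\Gamma_a$ and $B-\Gamma_b$ with the attachment vertex deleted, so their $T'$ have even order. We then run the same counting argument for the trees attached along each path.
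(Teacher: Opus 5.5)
Your proof is correct, and its skeleton matches the paper's. First you show all three cycles are even, so the paths share a parity. Then you show that the part of $B$ off the theta subgraph $\Theta=\mathcal{Q}_1\cup\mathcal{Q}_2\cup\mathcal{Q}_3$ has even order, and finish with a parity count.

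The differences lie in how the two steps are justified:
\begin{itemize}
\item \textbf{First step.} The paper invokes Lemma~\ref{span}, imported from an external reference, to get that all three cycles are independent. You use only the evenness of $\Gamma_a$ and $\Gamma_b$ together with the relation $|\Gamma_1|+|\Gamma_2|+|\Gamma_3|=2(|\mathcal{Q}_1|+|\mathcal{Q}_2|+|\mathcal{Q}_3|)$, which is self-contained.
\item \textbf{Second step.} The paper simply asserts that $\mathcal{M}_1\cap\mathcal{M}_2$ is a perfect matching of $B-(\Gamma_1\cup\Gamma_2)=B-\Theta$. This is true, but checking it needs your pendant-tree structure plus two more facts: a pendant forest attached inside $\mathcal{Q}_2$ is also perfectly matched by $\mathcal{M}_2$, and a forest has at most one perfect matching. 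You instead treat each pendant forest $T'$ as a union of components of $B-\Gamma_a$ or of $B-\Gamma_b$, according to where it attaches. This needs only the parity of $|V(T')|$, not agreement of two matchings, so your version is both easier to justify and fully rigorous.
\end{itemize}

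One small point of wording: when $T$ attaches inside $\mathcal{Q}_t$, you establish that $|V(T')|$ is even, not that $\mathcal{M}_0$ restricts to $T'$. Parity is all your count uses, so this is harmless.

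The closing counts are equivalent. The paper uses $|V(B)|$ even and inclusion--exclusion on $V(\Gamma_1\cup\Gamma_2)$ to get $|V(\mathcal{Q}_1)|$ even. You read off that $|\mathcal{Q}_t|-1$ is even directly from $V(B-\Gamma_a)$.

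Your main argument is complete, so the hedging ``fallback'' paragraph at the end is unnecessary and can be dropped.
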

	
	\begin{proof}
		By Lemma~\ref{span}, $B$ has a spanning elementary subgraph containing each cycle of $B$. Hence, all three cycles of $B$ are independent. Since $B$ has a perfect matching, each of these cycles is even. Consequently, the paths $\mathcal{Q}_1$, $\mathcal{Q}_2$, and $\mathcal{Q}_3$ are either all of even length or all of odd length.
		
		For $i=1,2$, let $\M_i$ be the unique perfect matching of $B-\Gamma_i$. Then,
		\[
		\M=\M_1\cap\M_2
		\]
		is a perfect matching of $B-(\Gamma_1\cup\Gamma_2)$. It follows that $|V(B-(\Gamma_1\cup\Gamma_2))|$ is even. Since $|V(B)|$ is also even, we conclude that $|V(\Gamma_1\cup\Gamma_2)|$ is even. Moreover,
		\[
		|V(\Gamma_1\cup\Gamma_2)|
		=
		|V(\Gamma_1)|
		+
		|V(\Gamma_2)|
		-
		|V(\Gamma_1\cap\Gamma_2)|.
		\]
		As $|V(\Gamma_1)|$ and $|V(\Gamma_2)|$ are even, it follows that
		\[
		|V(\Gamma_1\cap\Gamma_2)|
		=
		|V(\mathcal{Q}_1)|
		\]
		is even. Since the length of a path is one less than its number of vertices, $|\mathcal{Q}_1|$ is odd. Therefore, $|\mathcal{Q}_2|$ and $|\mathcal{Q}_3|$ are also odd.
	\end{proof}

		\begin{lem}\label{detAB}
			Let $B\in\B(\theta)$ be such that $B$ has a perfect matching. If $B$ has at least two independent cycles, then
			\[
			\det A(B)\in\{\pm1,\pm9\}.
			\]
		\end{lem}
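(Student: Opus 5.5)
By Lemma~\ref{span}, $B$ has exactly six spanning elementary subgraphs: three perfect matchings and, for each $i=1,2,3$, one spanning elementary subgraph containing $\Gamma_i$. No spanning elementary subgraph contains two cycles, since any two cycles of a $\theta$-type graph share an edge. So in Remark~\ref{det2} we have $m_0=3$, $m_{1i}=1$ for $i=1,2,3$, and $m_2=0$. This gives
\[
\det A(B)=(-1)^{\frac{n}{2}}\left[3-2\left((-1)^{\frac{|\Gamma_1|}{2}}+(-1)^{\frac{|\Gamma_2|}{2}}+(-1)^{\frac{|\Gamma_3|}{2}}\right)\right].
\]
The plan is to control the three signs $(-1)^{|\Gamma_i|/2}$ using the parities of the path lengths.

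By Corollary~\ref{alodd}, the lengths $|\mathcal{Q}_1|$, $|\mathcal{Q}_2|$, $|\mathcal{Q}_3|$ are all odd. Write $|\mathcal{Q}_k|=2a_k+1$. Then $|\Gamma_1|=|\mathcal{Q}_1|+|\mathcal{Q}_2|=2(a_1+a_2+1)$, so $|\Gamma_1|/2=a_1+a_2+1$, and similarly $|\Gamma_2|/2=a_1+a_3+1$ and $|\Gamma_3|/2=a_2+a_3+1$. Their sum is $2(a_1+a_2+a_3)+3$, which is odd. So among $|\Gamma_1|/2$, $|\Gamma_2|/2$, $|\Gamma_3|/2$, an odd number are odd: either exactly one or all three.

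If exactly one is odd, the bracketed sum of signs is $1+1-1=1$ and $\det A(B)=(-1)^{n/2}(3-2)=\pm1$. If all three are odd, the sum is $-3$ and $\det A(B)=(-1)^{n/2}(3+6)=\pm9$. This proves the claim.

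The only real content is the parity constraint from Corollary~\ref{alodd}. Without it, the sum of signs could also be $+3$, giving $\det A(B)=\pm3$, and that value must be excluded. I expect this step to be the main obstacle, and it is handled by the observation that the half-lengths of the cycles sum to an odd number. The other point to double-check is that Lemma~\ref{span} really gives exactly one cycle-containing spanning elementary subgraph per cycle with coefficient $m_{1i}=1$. This holds because $B-\Gamma_i$ is a forest (in fact, removing $\Gamma_i$ leaves no cycle), so its perfect matching is unique.
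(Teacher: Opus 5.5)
Your proof is correct and follows the paper's route: Lemma~\ref{span} and Remark~\ref{det2} give the formula with $m_0=3$, $m_{1i}=1$, $m_2=0$, and Corollary~\ref{alodd} then fixes the signs. Your observation that the half-lengths $|\Gamma_i|/2$ have odd sum is a cleaner substitute for the paper's case split on the residues $|\mathcal{Q}_i| \bmod 4$.

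Your sign count in the $\pm1$ case is in fact the correct one: exactly one cycle has length $\equiv 2 \pmod 4$ and two have length $\equiv 0 \pmod 4$, as in the example $B_2$ with cycle lengths $4,4,6$. The paper's display~\eqref{eq} states the counts the other way round, which as written would give $3-2(-1)=5$ rather than $1$, although the lemma's conclusion is unaffected.
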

		
		\begin{proof}
			By Lemma~\ref{span}, $B$ has exactly three perfect matchings and three spanning elementary subgraphs containing a cycle. Hence, by Remark~\ref{det2},
			\[
			\det A(B)
			=
			(-1)^{\frac{n}{2}}
			\left[
			3
			-
			2\left(
			(-1)^{\frac{|\Gamma_1|}{2}}
			+
			(-1)^{\frac{|\Gamma_2|}{2}}
			+
			(-1)^{\frac{|\Gamma_3|}{2}}
			\right)
			\right].
			\]
			
			By Corollary~\ref{alodd}, the paths $\mathcal{Q}_1$, $\mathcal{Q}_2$, and $\mathcal{Q}_3$ are all of odd length.
			
			Suppose first that
			\[
			|\mathcal{Q}_i|-|\mathcal{Q}_j|
			\equiv
			0
			\pmod4
			\quad\text{for all } i,j\in\{1,2,3\}.
			\]
			Then
			\[
			|\Gamma_i|
			=
			|\mathcal{Q}_j|+|\mathcal{Q}_k|
			\equiv
			2
			\pmod4,
			\]
			for every $i\in\{1,2,3\}$, where $\{i,j,k\}=\{1,2,3\}$. Consequently,
			\[
			(-1)^{\frac{|\Gamma_i|}{2}}=-1,
			\]
			for each $i$, and therefore
			\[
			\det A(B)
			=
			9(-1)^{\frac{n}{2}}
			\in
			\{\pm9\}.
			\]
			
			Now suppose that
			\[
			|\mathcal{Q}_i|-|\mathcal{Q}_j|
			\equiv
			2
			\pmod4
			\]
			for some distinct $i,j$. Then
			\begin{equation}\label{eq}
			|\Gamma_i|
			\equiv
			|\Gamma_j|
			\equiv
			2
			\pmod4,
			\qquad
			|\Gamma_k|
			\equiv
			0
			\pmod4,
			\end{equation}
			where $\{i,j,k\}=\{1,2,3\}$. Hence,
			\[
			(-1)^{\frac{|\Gamma_i|}{2}}
			=
			(-1)^{\frac{|\Gamma_j|}{2}}
			=
			-1,
			\qquad
			(-1)^{\frac{|\Gamma_k|}{2}}
			=
			1,
			\]
			which yields
			\[
			\det A(B)
			=
			(-1)^{\frac{n}{2}}
			\in
			\{\pm1\}.
			\]
			This completes the proof.
		\end{proof}
	
		\begin{cor}\label{cor19}
			Let $B\in\B(\theta)$ be such that $B$ has more than one independent cycle. Then $B$ is unimodular if and only if exactly one of the cycles $\Gamma_1$, $\Gamma_2$, and $\Gamma_3$ has length congruent to $0\pmod4$, while the other two have lengths congruent to $2\pmod4$.
		\end{cor}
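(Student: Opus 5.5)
The plan is to read the corollary off Lemma~\ref{detAB} and its proof, rather than recomputing spanning elementary subgraphs. Since $B$ has more than one independent cycle, Lemma~\ref{span} and Corollary~\ref{alodd} apply. So $B$ has a perfect matching, all three cycles are independent and even, and $|\mathcal{Q}_1|,|\mathcal{Q}_2|,|\mathcal{Q}_3|$ are all odd. By Remark~\ref{det2} we then have
\[
\det A(B)=(-1)^{\frac n2}\Bigl[3-2\bigl((-1)^{\frac{|\Gamma_1|}{2}}+(-1)^{\frac{|\Gamma_2|}{2}}+(-1)^{\frac{|\Gamma_3|}{2}}\bigr)\Bigr],
\]
and unimodularity is a condition on the residues of the $|\Gamma_i|$ modulo $4$.

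\textbf{Step 1 (the dichotomy).} The proof of Lemma~\ref{detAB} already splits into exactly two cases according to the residues of the odd lengths $|\mathcal{Q}_i|$ modulo $4$.
\begin{enumerate}
\item If all $|\mathcal{Q}_i|$ are congruent modulo $4$, then every $|\Gamma_i|\equiv 2\pmod 4$ and $\det A(B)=\pm 9$.
\item Otherwise, the residue pattern recorded in (\ref{eq}) holds, and $\det A(B)=\pm1$.
\end{enumerate}
Since each $|\mathcal{Q}_i|\in\{1,3\}\pmod 4$, these two cases are exhaustive.

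\textbf{Step 2 (the equivalence).} Unimodularity holds exactly in the second case, i.e.\ exactly when the cycle lengths follow the pattern (\ref{eq}). That is the stated criterion: one cycle in one residue class mod $4$ and the other two in the other. For the forward direction, $\pm 9$ is not $\pm1$, so a unimodular $B$ cannot be in the first case and must satisfy (\ref{eq}). For the converse, the pattern (\ref{eq}) excludes the all-$2\pmod 4$ case, so the second case of Lemma~\ref{detAB} gives $\det A(B)=\pm 1$.

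\textbf{The main obstacle} is the bookkeeping in (\ref{eq}): deciding which residue ($0$ or $2$) the single distinguished cycle has. The residue of a cycle is governed by $|\Gamma_k|=|\mathcal{Q}_i|+|\mathcal{Q}_j|\pmod 4$. Two odd paths with equal residues give a cycle $\equiv 2$, and two with different residues give a cycle $\equiv 0$.

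I would recheck this carefully against the bracket $3-2S$, where $S$ is the sum of the three signs $(-1)^{|\Gamma_i|/2}$. This quantity equals $\pm1$ only when $S=1$, i.e.\ when two of the signs are $+1$ and one is $-1$. The labels in (\ref{eq}) must be made consistent with this before concluding.

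\textbf{A genuine discrepancy.} When I do this check, it does not support the statement as worded. Among three residues in $\{1,3\}$, if they are not all equal then exactly two are equal. That gives exactly one cycle $\equiv 2\pmod 4$ and two cycles $\equiv 0\pmod 4$, which matches $S=1$. The pattern the corollary asserts — one cycle $\equiv 0$ and two $\equiv 2$ — gives $S=-1$ and $\det A(B)=\pm5$, and it cannot arise from odd path lengths at all. So the proof of the wording as given stands or falls with (\ref{eq}). What can actually be established is: $B$ is unimodular if and only if exactly one cycle has length $\equiv 2\pmod 4$ and the other two have length $\equiv 0\pmod 4$.
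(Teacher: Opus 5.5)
Your diagnosis is right. What it exposes is an error in the paper, not a gap in your reasoning. The paper's proof of this corollary is only ``follows directly from Equation~(\ref{eq})'', and (\ref{eq}) has the residues reversed. When the three odd lengths $|\mathcal{Q}_1|,|\mathcal{Q}_2|,|\mathcal{Q}_3|$ are not all congruent modulo $4$, exactly two of them agree. So exactly one cycle has length $\equiv 2\pmod 4$ and the other two have length $\equiv 0 \pmod 4$. The paper's arithmetic in that case also does not match its own signs: two signs $-1$ and one $+1$ give $3-2(-1)=5$, not $1$. The conclusion $\det A(B)\in\{\pm1,\pm9\}$ of Lemma~\ref{detAB} survives, because the true pattern gives $S=1$. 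The corollary as printed, however, is false, not merely unproved. The paper's own graph $B_2$ from Example~\ref{ex1} is a counterexample:
its two $4$-cycles are independent, so it satisfies the hypothesis;
its cycle lengths are $4,4,6$, so it fails the stated pattern;
yet $\det A(B_2)=(-1)^3\bigl[3-2(1+1-1)\bigr]=-1$.
Corollary~\ref{cor5} in the paper actually uses your corrected pattern ($|\Gamma_1|=4q_1+2$, $|\Gamma_2|=4q_2$, $|\Gamma_3|=4q_3$). The second condition of the paper's characterization theorem for $\B(\theta)$ repeats the wrong one.

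There are two things to tidy up in your write-up.

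\textbf{Restructure Steps~1--2.} These steps endorse (\ref{eq}) as ``the stated criterion'', and your last paragraph then retracts it. Cut that and argue directly. By Lemma~\ref{span} the bracket is $3-2S$ with $S\in\{\pm1,\pm3\}$. So $\det A(B)=\pm1$ iff $S=1$, i.e.\ iff exactly one cycle has length $\equiv 2\pmod 4$ and the other two $\equiv 0\pmod 4$, since all three cycles are even. No path analysis is needed for this.

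\textbf{Supply a counterexample.} To claim the printed version fails, rather than is merely unsupported, you need one unimodular graph satisfying the hypothesis. The printed pattern can never occur: $|\Gamma_1|+|\Gamma_2|+|\Gamma_3|=2(|\mathcal{Q}_1|+|\mathcal{Q}_2|+|\mathcal{Q}_3|)\equiv 2\pmod 4$, because a sum of three odd numbers is odd, whereas $0+2+2\equiv 0$. So a single such graph refutes the statement, and $B_2$ is one.
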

		\begin{proof}
			Follows directly from Equation~\ref{eq}.
		\end{proof}
	
		\begin{cor}\label{cor5}
			Let $B\in\B(\theta)$ such that all cycles in $B$ are independent. Then $B$ is unimodular if and only if $|\mathcal{Q}_1|=2(q_1+q_2-q_3)+1$, $|\mathcal{Q}_2|=2(q_1-q_2+q_3)+1$, and $|\mathcal{Q}_3|=2(-q_1+q_2+q_3)-1$, where $q_1$, $q_2$, and $q_3$ are positive integers. 
		\end{cor}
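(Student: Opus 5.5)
Since every cycle of $B$ is independent, $B$ has more than one independent cycle. By Corollary~\ref{alodd}, the three paths $\mathcal{Q}_1,\mathcal{Q}_2,\mathcal{Q}_3$ all have odd length. The plan is to reduce unimodularity to a condition on the residues of $|\mathcal{Q}_1|,|\mathcal{Q}_2|,|\mathcal{Q}_3|$ modulo $4$, and then show that this condition is exactly what the stated parametrization encodes.

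\textbf{Step 1: reduce to residues mod $4$.} I will use the formula from the proof of Lemma~\ref{detAB}:
\[
\det A(B)=(-1)^{\frac n2}\Bigl[3-2\sum_{i=1}^{3}(-1)^{\frac{|\Gamma_i|}{2}}\Bigr],
\]
together with $|\Gamma_i|=|\mathcal{Q}_j|+|\mathcal{Q}_k|$ for $\{i,j,k\}=\{1,2,3\}$. Each odd $|\mathcal{Q}_i|$ is $\equiv 1$ or $3 \pmod 4$, so there are two cases.
\begin{itemize}
\item All three residues agree. Then every cycle has length $\equiv 2\pmod 4$, and $\det A(B)=\pm 9$.
\item Two paths share a residue $a$ and the third has residue $a+2$. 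Then the cycle through the two agreeing paths has length $\equiv 2a\equiv 2\pmod 4$, while the other two cycles have length $\equiv 2a+2\equiv 0\pmod4$. The bracket is $3-2(-1+1+1)=1$, so $\det A(B)=\pm1$.
\end{itemize}
Hence $B$ is unimodular if and only if the residues of $|\mathcal{Q}_1|,|\mathcal{Q}_2|,|\mathcal{Q}_3|$ modulo $4$ are not all equal. I will recompute this directly rather than quoting Corollary~\ref{cor19}: the count shows that in the unimodular case two cycle lengths are $\equiv 0$ and one is $\equiv 2\pmod 4$, so I want the argument to rest on the determinant formula itself.

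\textbf{Step 2: check that the parametrization gives unimodularity.} Given positive integers $q_1,q_2,q_3$ with the stated lengths, summing pairs gives
\[
|\mathcal{Q}_1|+|\mathcal{Q}_2|=4q_1+2,\qquad |\mathcal{Q}_1|+|\mathcal{Q}_3|=4q_2,\qquad |\mathcal{Q}_2|+|\mathcal{Q}_3|=4q_3 .
\]
So $|\mathcal{Q}_1|\equiv|\mathcal{Q}_2|$ and $|\mathcal{Q}_3|\equiv|\mathcal{Q}_1|+2\pmod 4$. The residues are not all equal, and Step~1 gives $\det A(B)=\pm1$.

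\textbf{Step 3: show that unimodularity gives the parametrization.} Suppose $B$ is unimodular. Label the paths so that $\mathcal{Q}_3$ is the one whose residue differs; the statement's labels are only fixed up to this choice. Then $|\mathcal{Q}_1|\equiv|\mathcal{Q}_2|$ and $|\mathcal{Q}_3|\equiv|\mathcal{Q}_1|+2 \pmod 4$. Define
\[
q_1=\frac{|\mathcal{Q}_1|+|\mathcal{Q}_2|-2}{4},\qquad q_2=\frac{|\mathcal{Q}_1|+|\mathcal{Q}_3|}{4},\qquad q_3=\frac{|\mathcal{Q}_2|+|\mathcal{Q}_3|}{4}.
\]
These are integers by the congruences, and solving this linear system recovers the three formulas in the statement.

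\textbf{Main obstacle: positivity of $q_1$.} Clearly $q_2,q_3\ge1$, and $q_1\ge 0$. The only delicate point is excluding $q_1=0$, which happens exactly when $|\mathcal{Q}_1|=|\mathcal{Q}_2|=1$. That would make two edge-disjoint paths both single edges $[i,j]$, i.e. a double edge, which is impossible in a simple graph. So $q_1\ge1$. I will also state the relabeling convention explicitly, since the statement singles out $\mathcal{Q}_3$ as the path whose residue differs.
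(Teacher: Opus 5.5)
Your proposal is correct and follows the paper's route: reduce unimodularity to the mod-$4$ pattern (two cycles of length $\equiv 0$ and one of length $\equiv 2 \pmod 4$, equivalently exactly two paths sharing a residue) and then solve the linear system. Your $q_1,q_2,q_3$ are exactly the paper's $(|\Gamma_1|-2)/4$, $|\Gamma_2|/4$, $|\Gamma_3|/4$. Rederiving the pattern from the determinant formula instead of quoting Corollary~\ref{cor19} is the right call, since that corollary and Equation~(\ref{eq}) state the $0$/$2$ counts the wrong way round; the paper's proof of this corollary silently uses the correct pattern.

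Your explicit argument for $q_1\ge 1$ and your treatment of both directions are more complete than the paper's. The only thing to add is a line checking that $B$ has a perfect matching before invoking Corollary~\ref{alodd}: a $\theta$-graph always has an even cycle, and an even independent cycle together with a perfect matching of its complement gives one.
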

		\begin{proof}
			Assume that $B$ has three independent cycles. From Corollary~\ref{cor19}, $\det A(B)=\pm1$ if and only if $|\Gamma_i|-|\Gamma_j|\equiv2\pmod4$ and $\Gamma_k=0\pmod4$ for distinct $i,j,k\in\{1,2,3\}$. Without loss of generality, assume that $|\Gamma_1|=4q_1+2, |\Gamma_2|=4q_2$, and $|\Gamma_3|=4q_3$, where $q_1,q_2$, and $q_3$ are positive integers. Therefore, $|\Gamma_1|=|\mathcal{Q}_1|+|\mathcal{Q}_2|, |\Gamma_2|=|\mathcal{Q}_1|+|\mathcal{Q}_3|$, and $|\Gamma_3|=|\mathcal{Q}_2|+|\mathcal{Q}_3|$. Putting the values of $|\Gamma_1|,|\Gamma_2|$, and $|\Gamma_3|$ and then solving we get $|\mathcal{Q}_1|=2(q_1+q_2-q_3)+1$, $|\mathcal{Q}_2|=2(q_1-q_2+q_3)+1$, and $|\mathcal{Q}_3|=2(-q_1+q_2+q_3)-1$.
		\end{proof}
		
		The theorem below gives complete characterization of unimodular graphs in $\B(\theta)$.
		
		\begin{thm}\label{{thm2}}
				Let $B\in\B(\theta)$ with cycles $\Gamma_1$, $\Gamma_2$, and $\Gamma_3$. Then $B$ is unimodular if and only if it satisfies one of the following conditions:
				\begin{enumerate}
					\item $B$ has a unique perfect matching.
					
					\item $B$ has at least two independent cycles, and exactly one of the cycles $\Gamma_1$, $\Gamma_2$, and $\Gamma_3$ has length congruent to $0\pmod4$, while the other two have lengths congruent to $2\pmod4$.
				\end{enumerate}
		\end{thm}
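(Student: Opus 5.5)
We prove the theorem by splitting on the number of perfect matchings of $B$ and, within the case of several perfect matchings, on the number of independent cycles. The case analysis is exhaustive: Lemma~\ref{nonperf} disposes of graphs with no perfect matching, and Lemma~\ref{mperf} forces any graph with more than one perfect matching to contain at least one (even) independent cycle.

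\emph{Sufficiency.} If condition 1 holds, Lemma~\ref{tuniquni} gives $\det A(B)=\pm1$. If condition 2 holds, $B$ has at least two independent cycles. Independent cycles exist only when $B$ has a perfect matching, since $B-\Gamma$ has one. So Lemma~\ref{span} and Lemma~\ref{detAB} apply, and Corollary~\ref{cor19} yields unimodularity directly from the prescribed congruences of $|\Gamma_1|,|\Gamma_2|,|\Gamma_3|$ modulo $4$.

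\emph{Necessity.} Suppose $B$ is unimodular. By Lemma~\ref{nonperf}, $B$ has at least one perfect matching. If it has exactly one, condition 1 holds. Otherwise $B$ has more than one perfect matching, so by Lemma~\ref{mperf} it contains at least one independent cycle. If it had exactly one independent cycle, Lemma~\ref{notuni} would give $\det A(B)\in\{0,\pm4\}$, contradicting unimodularity. Hence $B$ has at least two independent cycles, and Corollary~\ref{cor19} (equivalently, the dichotomy in the proof of Lemma~\ref{detAB}, where the only alternative gives $\pm9$) forces exactly one cycle length $\equiv0\pmod4$ and the other two $\equiv2\pmod4$. This is condition 2.

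The only point needing care is that the two cases of Lemma~\ref{detAB} are exhaustive. By Corollary~\ref{alodd} all $|\mathcal{Q}_i|$ are odd, so each pairwise difference $|\mathcal{Q}_i|-|\mathcal{Q}_j|$ is congruent to $0$ or $2\pmod 4$. Therefore either all differences are $0\pmod4$, giving $\pm9$, or some difference is $2\pmod4$, giving the pattern of equation~\eqref{eq}. This settles the theorem.
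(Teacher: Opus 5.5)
Your case split matches the paper's: no perfect matching via Lemma~\ref{nonperf}, a unique one via Lemma~\ref{tuniquni}, one independent cycle via Lemma~\ref{notuni}, then the case of several independent cycles. However, the step you singled out as the delicate one is where the argument fails, and the statement as written is false. You checked that the two cases in the proof of Lemma~\ref{detAB} are exhaustive, but you did not check the congruences asserted in \eqref{eq}. With all $|\mathcal{Q}_i|$ odd, suppose $|\mathcal{Q}_i|-|\mathcal{Q}_j|\equiv 2\pmod 4$. The third length agrees mod $4$ with one of them, say $|\mathcal{Q}_k|\equiv|\mathcal{Q}_i|$. Then $|\mathcal{Q}_i|+|\mathcal{Q}_k|\equiv 2$, while $|\mathcal{Q}_i|+|\mathcal{Q}_j|\equiv|\mathcal{Q}_j|+|\mathcal{Q}_k|\equiv 0\pmod 4$. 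So exactly one cycle has length $\equiv 2$ and two have length $\equiv 0\pmod 4$, and Remark~\ref{det2} gives $(-1)^{n/2}\bigl(3-2(-1+1+1)\bigr)=\pm1$. The pattern in \eqref{eq} and in condition 2 is the reverse. Inserted into the same formula it would give $(-1)^{n/2}\bigl(3-2(1-1-1)\bigr)=\pm5$. Moreover, under the hypotheses of condition 2 it never occurs: all cycles are then even, so $B$ has a perfect matching and Corollary~\ref{alodd} makes every $|\mathcal{Q}_i|$ odd. The three cycle lengths then sum to $2(|\mathcal{Q}_1|+|\mathcal{Q}_2|+|\mathcal{Q}_3|)\equiv 2\pmod 4$, whereas $0+2+2\equiv 0\pmod 4$.

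Consequently your sufficiency argument for condition 2 is vacuous, and necessity is false. The graph $B_2$ of Example~\ref{ex1} is $\theta$-type, with paths of lengths $1,3,3$ between its two vertices of degree $3$. Its cycles have lengths $4,4,6$, the two $4$-cycles are independent, and it has three perfect matchings. Lemma~\ref{det} gives $\det A(B_2)=-3+4-2=-1$, so $B_2$ is unimodular, yet it satisfies neither condition. The correct second condition is that exactly one cycle has length $\equiv 2\pmod 4$ and the other two $\equiv 0\pmod 4$; equivalently, the $|\mathcal{Q}_i|$ are not all congruent mod $4$. This is in fact what Corollary~\ref{cor5}, the result the paper's one-line proof cites, parametrizes with $|\Gamma_1|=4q_1+2$, $|\Gamma_2|=4q_2$, $|\Gamma_3|=4q_3$. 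With condition 2 corrected, your argument goes through after one more repair. Your claim that independent cycles exist only when $B$ has a perfect matching is false in general: Remark~\ref{rem1} and Lemma~\ref{lem4.3} treat odd independent cycles in graphs with no perfect matching. What is true is that under condition 2 every cycle is even. So an independent cycle $\Gamma$ is even, and a perfect matching of $B-\Gamma$ together with one of $\Gamma$ is a perfect matching of $B$.
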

		\begin{proof}
			Follows from Lemma \ref{nonperf}, Lemma \ref{tuniquni}, Lemma \ref{notuni} and Corollary \ref{cor5}.
		\end{proof}
	
	\section{Determinant of graphs in $\B$}
	
		\begin{lem}\label{lem4.1}
			If $B\in\B$ has exactly one perfect matching, then $\det A(B)\in\{\pm1,\pm3,\pm5\}$.
		\end{lem}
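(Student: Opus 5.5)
We split according to whether $B\in\B(\theta)$ or $B\in\B(\infty)$, and in each case use Corollary~\ref{cor1} to kill the terms of Remark~\ref{det2} that involve a single cycle.

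\emph{Case $B\in\B(\theta)$.} This is immediate from Lemma~\ref{tuniquni}, which gives $\det A(B)=(-1)^{\frac n2}\in\{\pm1\}$. In more detail, any two cycles of $B$ share an edge, so every spanning elementary subgraph contains at most one cycle. By Corollary~\ref{cor1} no cycle is independent, so the only spanning elementary subgraph is the unique perfect matching $\M$.

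\emph{Case $B\in\B(\infty)$.} Let $\Gamma,\Gamma'$ be the two cycles and apply Remark~\ref{det2} with $m_0=1$.
\begin{itemize}
\item \emph{Single-cycle terms.} Suppose a spanning elementary subgraph has $\Gamma$ as its only cycle component. Then $B-\Gamma$ has a perfect matching, so $\Gamma$ is independent, which contradicts Corollary~\ref{cor1}. The same holds for $\Gamma'$. Hence every $m_{1i}=0$.
\item \emph{Two-cycle term.} We have $m_2\in\{0,1\}$. If $\Gamma$ and $\Gamma'$ share a vertex, no spanning elementary subgraph can contain both, so $m_2=0$. If they are vertex-disjoint, a spanning elementary subgraph containing both corresponds exactly to a perfect matching of $B-(\Gamma\cup\Gamma')$, or to the empty matching if this graph is empty.
\end{itemize}

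To see that this matching is unique, take two distinct perfect matchings of $B-(\Gamma\cup\Gamma')$. Each, together with $\M$ restricted appropriately, would not directly give two matchings of $B$. It is simpler to argue as follows: by Lemma~\ref{mperf}, a second perfect matching of $B-(\Gamma\cup\Gamma')$ would force an even cycle in $B-(\Gamma\cup\Gamma')$. But $B$ has only the two cycles $\Gamma,\Gamma'$, so no such cycle exists. Hence $m_2\le 1$.

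Two subcases remain. If $m_2=0$, then $\det A(B)=(-1)^{\frac n2}=\pm1$. If $m_2=1$, the hypothesis of Lemma~\ref{iuniq} holds, which gives $\det A(B)\in\{\pm3,\pm5\}$. This is also visible directly as $(-1)^{\frac n2}\bigl[1+4(-1)^{\frac{|\Gamma|+|\Gamma'|}{2}}\bigr]$.

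Combining the two cases gives $\det A(B)\in\{\pm1,\pm3,\pm5\}$.

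The only point needing care is justifying $m_2\le1$ and the vanishing of the single-cycle terms. Both rest on Corollary~\ref{cor1}, Lemma~\ref{mperf}, and the fact that $B$ has exactly two cycles. I do not expect any genuine obstacle beyond this bookkeeping.
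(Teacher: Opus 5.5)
Your proof is correct and follows the same route as the paper. The $\theta$-type case is Lemma~\ref{tuniquni}. The $\infty$-type case splits according to whether $B-(\Gamma\cup\Gamma')$ is empty or has a perfect matching, which gives $\pm3,\pm5$ by Lemma~\ref{iuniq}, or neither, in which case only $\M$ survives and you get $\pm1$ (the sufficiency half of Theorem~\ref{thm1}). Your explicit check that $m_{1i}=0$ and $m_2\le1$ (since $B-(\Gamma\cup\Gamma')$ is a forest) makes that second branch clearer than the paper's one-line citation. Just delete the abandoned sentence about ``$\M$ restricted appropriately.''
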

		\begin{proof}
			Follows from Lemma \ref{iuniqueperuni}, Lemma \ref{iuniq}, and Lemma \ref{tuniquni}.
		\end{proof}
	
		\begin{lem}\label{lem4.2}
			If $B\in\B$ has more than one perfect matching, then $\det A(B)\in \{0,\pm1,\pm4,\pm9,\pm16\}$.
		\end{lem}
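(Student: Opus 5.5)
We argue by the type of $B$, combining the determinant computations already made. Let $B\in\B$ have more than one perfect matching. By Lemma~\ref{mperf}, $B$ contains an even cycle $\Gamma$ with $B-\Gamma$ having a perfect matching, so $B$ has at least one independent cycle.

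\emph{Case $B\in\B(\infty)$.} Lemma~\ref{cor2} gives directly $\det A(B)\in\{0,\pm4,\pm16\}$. This is a subset of the claimed set.

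\emph{Case $B\in\B(\theta)$.} We split by the number of independent cycles.
\begin{itemize}
\item If $B$ has exactly one independent cycle, Lemma~\ref{notuni} (via Lemma~\ref{1ind} and Remark~\ref{det2}) gives
\[
\det A(B)=2(-1)^{\frac{n}{2}}\bigl[1-(-1)^{\frac{|\Gamma|}{2}}\bigr]\in\{0,\pm4\}.
\]
\item If $B$ has at least two independent cycles, then $B$ has a perfect matching by hypothesis, so Lemma~\ref{detAB} applies and yields $\det A(B)\in\{\pm1,\pm9\}$.
\end{itemize}

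Taking the union over all cases gives $\{0,\pm1,\pm4,\pm9,\pm16\}$, as claimed.

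The only point needing care is that the case split is exhaustive. Lemma~\ref{notuni} is stated for any $B\in\B$ with more than one perfect matching and exactly one independent cycle, so it covers both types. Lemma~\ref{detAB} needs only a perfect matching and at least two independent cycles, and both conditions hold here. Since at least one independent cycle exists, no case is missed. I do not expect any genuine obstacle: the statement is an assembly of the preceding lemmas.
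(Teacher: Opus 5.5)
Your proof is correct and follows the paper's route: the paper's proof just cites Lemma~\ref{cor2}, Lemma~\ref{notuni}, and Lemma~\ref{detAB}, and your split by type and by the number of independent cycles makes explicit why these cases are exhaustive. The set $\{\pm1,\pm9\}$ you take from Lemma~\ref{detAB} is right: when all three paths have odd length, either all three cycle lengths are $\equiv 2 \pmod 4$ (bracket $9$), or exactly one is $\equiv 2\pmod 4$ and two are $\equiv 0\pmod 4$ (bracket $1$). Equation~\eqref{eq} in the paper states this second pattern with the residues swapped, which would give $\pm5$, but the lemma's stated conclusion, and hence your argument, is unaffected.
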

		\begin{proof}
			Follows from Lemma \ref{cor2}, Lemma \ref{notuni}, and Lemma \ref{detAB}.
		\end{proof}

		To complete the analysis, we now compute the determinant of $B \in \B$ when it has no perfect matching.

		\begin{lem}\label{lem4.3}
			Let $B\in\B$. If $B$ does not possess a perfect matching, then
			\[
			\det A(B)\in\{0,\pm2,\pm4,\pm8\}.
			\]
		\end{lem}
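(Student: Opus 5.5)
We use Harary's formula (Lemma~\ref{det}) directly rather than Remark~\ref{det2}, since $n$ may be odd here. By hypothesis $B$ has no perfect matching, so every spanning elementary subgraph $H$ contains at least one cycle. Each such $H$ is a cycle, or a pair of vertex-disjoint cycles, together with a perfect matching of what remains. The plan is to bound how many $H$ there are for each choice of cycle set, and then evaluate the possible signed sums. We treat $\B(\theta)$ and $\B(\infty)$ separately.

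\emph{Case $B\in\B(\theta)$.} Any two cycles share an edge, so each $H$ contains exactly one cycle $\Gamma_i$. Removing $V(\Gamma_i)$ deletes $i$, $j$ and two of the paths $\mathcal{Q}_k$. Hence $B-\Gamma_i$ is a forest and has at most one perfect matching, so $m_{1i}\in\{0,1\}$. If $\Gamma_i$ is independent, then by Remark~\ref{rem1} (or a parity argument when $n$ is odd) $\Gamma_i$ has the same parity as $n$. All independent cycles therefore have equal parity. By the observation after Remark~\ref{paths}, either exactly two cycles are odd or all three are even.
\begin{itemize}
\item If $n$ is odd, at most two cycles are independent.
\item If $n$ is even, an independent even cycle $\Gamma_i$, combined with either perfect matching of $\Gamma_i$, would give a perfect matching of $B$. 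This is excluded, so no cycle is independent and $\det A(B)=0$.
\end{itemize}
Thus $\det A(B)$ is a sum of at most two terms $\pm2$, which lies in $\{0,\pm2,\pm4\}$.

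\emph{Case $B\in\B(\infty)$, cycles $\Gamma,\Gamma'$.} Let $m_1,m_1',m_2$ be the numbers of $H$ containing only $\Gamma$, only $\Gamma'$, and both, respectively. The graph $B-(\Gamma\cup\Gamma')$ is a forest, so $m_2\le1$, and $m_2=1$ forces $\Gamma,\Gamma'$ to be vertex-disjoint. The graph $B-\Gamma$ has at most one cycle, namely $\Gamma'$. By Lemma~\ref{mperf}, it has two perfect matchings exactly when $\Gamma'$ is even and independent in $B-\Gamma$. Otherwise it has at most one perfect matching, so $m_1,m_1'\le2$.

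The key step is the following claim: if $m_2=1$, then $m_1,m_1'\in\{0,2\}$. Suppose $m_2=1$ and $B-\Gamma$ has a perfect matching. Then $|V(B-\Gamma)|$ and $|V(B-\Gamma-\Gamma')|$ are both even, so $\Gamma'$ is even. Since $\Gamma'$ is also independent in $B-\Gamma$, Lemma~\ref{mperf} gives $m_1=2$. Conversely, if $m_1=2$, the same reasoning yields $m_2=1$ with $\Gamma'$ even. So $m_1=2$ holds if and only if ($m_2=1$, $m_1\neq0$). We now run through the cases.
\begin{itemize}
\item \emph{$m_2=0$.} Then $m_1,m_1'\le1$, and $\det A(B)=\pm2\pm2$ or a subcase of it, which lies in $\{0,\pm2,\pm4\}$.
\item \emph{$m_2=1$ and $m_1=m_1'=0$.} Then $\det A(B)=\pm4$.
\item \emph{$m_2=1$ and $m_1=m_1'=2$.} Both cycles are even. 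Combining a matching of $\Gamma$, a matching of $\Gamma'$ and the matching of the rest gives a perfect matching of $B$, which is a contradiction.
\item \emph{$m_2=1$, $m_1=2$, $m_1'=0$.} The two subgraphs containing only $\Gamma$ have the same edge count, hence the same sign; they contribute $\pm4$. Adding the $\pm4$ from the subgraph containing both cycles gives $\det A(B)\in\{0,\pm8\}$.
\end{itemize}
The main obstacle is this claim excluding $m_2=1$ with $m_1=1$, since that is exactly what rules out $\pm6$ and $\pm12$. The parity bookkeeping above, together with Lemma~\ref{mperf}, is how we handle it.

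Collecting both cases gives $\det A(B)\in\{0,\pm2,\pm4,\pm8\}$.
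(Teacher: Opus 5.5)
Your proof is correct, but it is organized differently from the paper's. The paper does not split by type. Instead it runs through configurations of independent cycles:
\begin{itemize}
\item exactly one independent cycle, giving $\{0,\pm2\}$;
\item an independent $\Gamma$ with $\Gamma'$ independent in $B-\Gamma$, giving $\{0,\pm8\}$;
\item two independent cycles, giving $\{0,\pm4\}$;
\item vertex-disjoint $\Gamma,\Gamma'$ with $B-(\Gamma\cup\Gamma')$ matchable but neither cycle independent, giving $\pm4$.
\end{itemize}
It evaluates each case via Remark~\ref{det2}. It then asserts exhaustiveness because independent cycles are odd here and a bicyclic graph has at most two odd cycles.

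You instead fix the type and bound the counts directly: $m_{1i}$ in the $\theta$ case, and $m_1,m_1',m_2$ in the $\infty$ case. The crux is your parity claim that $m_2=1$ forces $m_1,m_1'\in\{0,2\}$.

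The paper's version is shorter and type-free. Yours makes exhaustiveness checkable, and it repairs several points the paper glosses over.
\begin{itemize}
\item Remark~\ref{det2} contains $(-1)^{n/2}$, which is meaningless for odd $n$, the typical case in this lemma. You apply Harary's formula directly instead.
\item The paper's first case claims that a unique independent cycle gives a unique spanning elementary subgraph. This is false in the configuration $m_1=2$, $m_1'=0$, $m_2=1$: there only $\Gamma$ is independent, yet there are three spanning elementary subgraphs.
\item The paper's two-independent-cycles case tacitly needs $m_1=m_1'=1$ and $m_2=0$. Your claim, together with the contradiction when $m_1=m_1'=2$, supplies this.
\end{itemize}

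One small fill-in is needed. Lemma~\ref{mperf} only gives ``more than one'' perfect matching, so the bound $m_1\le 2$ needs one more observation. When $\Gamma'$ is the only cycle of $B-\Gamma$, any two distinct perfect matchings of $B-\Gamma$ have symmetric difference exactly $E(\Gamma')$, so a third cannot exist. The paper uses the same fact without comment.
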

		
		\begin{proof}

		Since $B$ has no perfect matching, every spanning elementary subgraph of $B$ contains a cyclic component. Hence, by Remark~\ref{det2}, $\det A(B)$ is even.
		
		Suppose first that $B$ has exactly one independent cycle, then $B$ has exactly one spanning elementary subgraph. Thus Remark~\ref{det2} yields
		\[
		\det A(B)\in\{0,\pm2\}.
		\]
		
		Next, suppose that $B$ has an independent cycle $\Gamma$ such that $B-\Gamma$ contains another independent cycle $\Gamma'$. Then $\Gamma$ and $\Gamma'$ are vertex-disjoint, and $\Gamma'$ is even since $B-\Gamma$ has a perfect matching. By Lemma~\ref{mperf}, $B-\Gamma$ has exactly two perfect matchings, say $\M_1$ and $\M_2$. Consequently, $\Gamma\cup\M_1$ and $\Gamma\cup\M_2$ are two distinct spanning elementary subgraphs of $B$. Moreover, since $\Gamma$ and $\Gamma'$ are vertex-disjoint, the graph consisting of $\Gamma$, $\Gamma'$, and a perfect matching of $B-(\Gamma\cup\Gamma')$ forms another spanning elementary subgraph of $B$. Hence, by Remark~\ref{det2},
		\[
		\det A(B)\in\{0,\pm8\}.
		\]
		
		Now suppose that $B$ contains two independent cycles. As a result, $B$ has exactly two spanning elementary subgraphs each containing one of the independent cycle. Now, by Remark~\ref{det2},
		\[
		\det A(B)\in\{0,\pm4\}.
		\]
		
		Finally, suppose that $B-(\Gamma\cup\Gamma')$ has a perfect matching $\M$, where $\Gamma$ and $\Gamma'$ are vertex-disjoint but neither is independent in $B$. Then $\M\cup\Gamma\cup\Gamma'$ is the unique spanning elementary subgraph of $B$. Therefore, by Remark~\ref{det2},
		\[
		\det A(B)
		=
		4(-1)^{\frac{n+|\Gamma|+|\Gamma'|}{2}}
		\in\{\pm4\}.
		\]
		
		Since every independent cycle in a graph without a perfect matching is odd (Remark~\ref{rem1}), and a bicyclic graph contains at most two odd cycles, the above cases exhaust all possible configurations. Therefore,
		\[
		\det A(B)\in\{0,\pm2,\pm4,\pm8\},
		\]
		as required.
	\end{proof}

	In the following theorem we list all the possible values of $\det A(B)$ for $B\in\B$.
		
		\begin{thm}
			Let $B\in\B$. Then $\det A(B)\in\{0,\pm 1,\pm 2,\pm3,\pm 4,\pm5,\pm8,\pm 9,\pm 16\}$.
		\end{thm}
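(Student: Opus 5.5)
The plan is a case split on the number of perfect matchings of $B$, using the three lemmas just established. Every $B\in\B$ falls into exactly one of three cases: it has no perfect matching, exactly one perfect matching, or more than one perfect matching. In each case we read off the admissible determinant values from an earlier result and then take the union.

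\emph{No perfect matching.} Lemma~\ref{lem4.3} gives $\det A(B)\in\{0,\pm2,\pm4,\pm8\}$.

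\emph{Exactly one perfect matching.} Lemma~\ref{lem4.1} gives $\det A(B)\in\{\pm1,\pm3,\pm5\}$. This lemma rests on Lemmas~\ref{iuniqueperuni}, \ref{iuniq} and \ref{tuniquni}. For the $\infty$-type case we implicitly use Theorem~\ref{thm1}: if $B-(\Gamma\cup\Gamma')$ is neither empty nor has a perfect matching, the determinant is $\pm1$; otherwise Lemma~\ref{iuniq} gives $\pm3$ or $\pm5$.

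\emph{More than one perfect matching.} Lemma~\ref{lem4.2} gives $\det A(B)\in\{0,\pm1,\pm4,\pm9,\pm16\}$. This collects Lemma~\ref{cor2} for $\B(\infty)$, and Lemma~\ref{notuni} and Lemma~\ref{detAB} for $\B(\theta)$ with one or with at least two independent cycles. By Lemma~\ref{mperf}, at least one independent cycle exists in this case, so these subcases are exhaustive.

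Taking the union of the three sets yields exactly
\[
\{0,\pm1,\pm2,\pm3,\pm4,\pm5,\pm8,\pm9,\pm16\},
\]
which is the claimed set.

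The only real obstacle is confirming that the earlier case analyses are exhaustive, since the theorem is a pure union. Lemma~\ref{lem4.3} is the delicate one: it must cover every configuration of independent cycles and vertex-disjoint cycle pairs when $m_0=0$. The key check is that in Remark~\ref{det2} each term is a multiple of $2$ (cycle terms) or $4$ (two-cycle terms), with at most a bounded number of such subgraphs. For Lemma~\ref{lem4.2} in the $\theta$-type case, we verify that the dichotomy ``exactly one independent cycle'' versus ``at least two'' is complete, which again follows from Lemma~\ref{mperf}.
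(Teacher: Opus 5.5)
Your proposal is correct and follows the paper's proof: the paper also obtains the theorem as the union of the value sets from Lemma~\ref{lem4.1}, Lemma~\ref{lem4.2}, and Lemma~\ref{lem4.3}, according to whether $B$ has exactly one, more than one, or no perfect matching. Your extra remarks on exhaustiveness are consistent with those lemmas, for instance routing the $\infty$-type unique-matching case through Theorem~\ref{thm1}, but the proof needs nothing beyond citing them.
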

		\begin{proof}
			Follows from Lemma~\ref{lem4.1}, Lemma~\ref{lem4.2}, and Lemma~\ref{lem4.3}
		\end{proof}
	
%	\section{Conclusion}
%	
%	This study establishes conditions that are necessary and sufficient for the unimodularity of bicyclic graphs and determines all possible determinant values for such graphs. The findings provide a basis for investigating the invertibility and eigenvalue properties of bicyclic graphs and offer a framework, via the concept of independent cycles, for extending the characterization to tricyclic unimodular graphs.
	
	\section*{Acknowledgements}
		
		The author expresses gratitude to his Ph.D. supervisor, Dr. Debajit Kalita, for his guidance throughout the research. The author also acknowledges the financial assistance provided by the UGC, Government of India, under the UGC SRF scheme, bearing NTA Reference No.: 191620004617.
	
%	\bibliography{Uni_reference}
%	\bibliographystyle{plain}

\end{document}